\newtheorem{theorem}{Theorem}[section]
\newtheorem{lemma}[theorem]{Lemma}
\theoremstyle{definition}
\newtheorem{example}[theorem]{Example}
\theoremstyle{remark}
\newtheorem{remark}[theorem]{Remark}
\numberwithin{equation}{section}
\definecolor{black}{rgb}{0,0,0}
\definecolor{red}{rgb}{1,0,0}
\definecolor{blue}{rgb}{0,0,1}
\def\Real{\mathbb{R}}
\def\bff{\textit{\textbf{f}}}
\def\bfg{\textit{\textbf{g}}}
\def\bfn{\textit{\textbf{n}}}
\def\bfr{\textit{\textbf{r}}}
\def\bfu{\textit{\textbf{u}}}
\def\bfv{\textit{\textbf{v}}}
\def\bfw{\textit{\textbf{w}}}
\def\bfe{\textit{\textbf{e}}}
\def\bfL{\textit{\textbf{L}}}
\def\bfH{\textit{\textbf{H}}}
\def\bfM{\textit{\textbf{M}}}
\def\bfP{\textit{\textbf{P}}}
\def\bfV{\textit{\textbf{V}}}
\def\bfW{\textit{\textbf{W}}}
\def\bfPi{\boldsymbol{\Pi}}
\def\bfmu{\boldsymbol{\mu}}
\def\bftheta{\boldsymbol{\theta}}
\def\bfeta{\boldsymbol{\eta}}
\def\bftheta{\boldsymbol{\theta}}
\def\bfphi{\boldsymbol{\phi}}
\def\bfpsi{\boldsymbol{\psi}}
\def\calT{\mathcal{T}}
\def\calE{\mathcal{E}}
\begin{document}

\title[A HDG method for Maxwell equations]
{A superconvergent  HDG method for the Maxwell equations}


\author{Huangxin Chen}
\address{School of Mathematical Sciences and Fujian Provincial Key Laboratory on Mathematical Modeling and 
High Performance Scientific Computing, Xiamen University, Fujian, 361005, China}
\email{chx@xmu.edu.cn}

\author{Weifeng Qiu}
\address{Department of Mathematics, City University of Hong Kong,
83 Tat Chee Avenue, Kowloon, Hong Kong, China}
\email{weifeqiu@cityu.edu.hk}

\author{Ke Shi}
\address{Department of Mathematics $\&$ Statistics, Old Dominion University, Norfolk, VA 23529, USA}
\email{shike1983@gmail.com}

\author{Manuel Solano}
\address{Departamento de Ingenier\'ia Matem\'atica 
and Centro de Investigaci\'on 
en Ingenier\'ia Matem\'atica (CI$^2$MA), Universidad de Concepci\'on, Concepci\'on, Chile}
\email{msolano@ing-mat.udec.cl}

\thanks{Corresponding author: Weifeng Qiu.}


\subjclass[2010]{65N15, 65N30}
\keywords{Discontinuous Galerkin, hybridization, Maxwell equations, superconvergence, simplicial mesh, general polyhedral mesh.}

\date{}

\begin{abstract}
We present and analyze a new hybridizable discontinuous Galerkin (HDG) method for the steady state Maxwell equations. In order to make the problem well-posed, a condition of divergence is imposed on the electric field. Then a Lagrange multiplier $p$ is introduced, and the problem becomes the solution of a mixed curl-curl formulation of the Maxwell's problem. We use polynomials of degree $k+1$, $k$, $k$ to approximate $\bfu,\nabla \times \bfu$ and $p$ respectively. In contrast, we only use a non-trivial subspace of polynomials of degree $k+1$ to approximate the numerical tangential trace of the electric field and polynomials of degree $k+1$ to approximate the numerical trace of the Lagrange multiplier on the faces. On the simplicial meshes, a special choice of the stabilization parameters is applied, and the HDG system is shown to be well-posed. Moreover, we show that the convergence rates for $\boldsymbol{u}$ and $\nabla \times \boldsymbol{u}$ are independent of the Lagrange multiplier $p$. If we assume the dual operator of the Maxwell equation on the domain has adequate  regularity, we show that the convergence rate for $\boldsymbol{u}$ is $O(h^{k+2})$. From the point of view of degrees of freedom of the globally coupled unknown: numerical trace, this HDG method achieves superconvergence for the electric field without postprocessing. Finally, we show that on general polyhedral elements, by a particular choice of the stabilization parameters again, the HDG system is also well-posed and the 
superconvergence of the HDG method is derived.
\end{abstract}

\maketitle

\section{Introduction}
In this paper, we consider the Maxwell equations of the following form:
\begin{subequations}\label{original_pde_0}
\begin{align}
\nabla \times (\nabla \times \bfu) + \nabla p &= \boldsymbol{f} \qquad   \text{in $\Omega$, }\\
\nabla \cdot \bfu & = 0  \qquad \ \text{in $\Omega$}, \\
\bfu \times \bfn & = \bfg  \qquad \ \text{on $\partial \Omega$}, \\
p &= 0 \qquad \ \text{on $\partial \Omega$},
\end{align}
\end{subequations}
where $\Omega$ is a simply connected polyhedral domain in $\mathbb{R}^3$ and the unknowns are $\bfu$ and $p$. Here $\bff$ is the source term and $\bfg$ is given on $\partial \Omega$. The above Maxwell's problem follows from finding an approximation of the following problem within a three dimensional domain:
\[
\nabla \times (\nabla \times \bfu) = \bff,\quad \nabla \cdot \bfu  = 0 , \quad \bfu \times \bfn|_{\partial \Omega}  = \bfg. 
\]
For simplicity, we use divergence free condition of the electric field. Actually, we can also assume $\nabla \cdot \bfu  = \sigma \in L^2(\Omega)$ and $\sigma$ is nonzero. In order to have a better control on the divergence of the electric field $\bfu$, a Lagrange multiplier $p$ is introduced (cf. \cite{Bonito2013,Bonito2014,MWYS15}), and we consider a generalized mixed formulation (\ref{original_pde_0}). It is not necessary to require that the source term $\bff$ is divergence free. Actually, the Lagrange multiplier $p$ accommodates the possible non-zero divergence of $\bff$. Obviously, $p = 0$ if $\bff$ is divergence free.

Various numerical methods have been studied to solve the Maxwell's problem in the literature which include $H({\rm curl},\Omega)$-conforming edge element methods \cite{Nedelec80,Nedelec86,Hiptmair02,Monk2003,Zhong09}, discontinuous Galerkin (DG) methods \cite{Perugia02,Perugia03,Cockburn04,Houston04,Houston05,Brenner2008,Nguyena11,R.Hiptmair13,FW2014}, interior penalty method with $C^0$ finite element \cite{Bonito2014} and weak Galerkin FEM method \cite{MWYS15}. The DG methods have several attractive features which include the capabilities to handle complex geometries, to provide 
high-order accurate solutions, etc. For instance, the mixed DG formulation for the problem (\ref{original_pde_0}) was proposed and analyzed. However, the dimension of the standard approximation DG space is much larger than the 
dimension of the corresponding conforming space. Hybridizable discontinuous Galerkin (HDG) methods \cite{Cockburn09} 
were recently introduced to address this issue. The HDG methods retain the advantages of standard DG methods, 
and the resulting system is only due to the unknowns on the skeleton of the mesh. The HDG methods were introduced in \cite{Nguyena11} for the numerical solution of the time-harmonic Maxwell problem. Compared to the IPDG method for the  Maxwell equations in \cite{Houston04,Houston05,FW2014}, the HDG methods have less globally coupled unknowns.

In order to apply HDG method, as usual, we first write the Maxwell equations (\ref{original_pde_0}) into a system of first order equations. To this end, we introduce a new unknown $\bfw = \nabla \times \bfu$. Now we can write the Maxwell equations (\ref{original_pde_0}) in a mixed curl-curl formulation as follows: 
\begin{subequations}\label{original}
\begin{align}
\label{original_1}
 \nabla \times \bfu  & = \bfw \qquad \text{in $\Omega$}, \\
\label{original_2}
\nabla \times \bfw + \nabla p &= \boldsymbol{f} \qquad   \text{in $\Omega$, }\\
\label{original_3}
\nabla \cdot \bfu & = 0 \qquad \ \text{in $\Omega$}, \\
\label{original_4}
\bfu \times \bfn & = \bfg \qquad \ \text{on $\partial \Omega$}, \\
\label{original_5}
p &= 0 \qquad \ \text{on $\partial \Omega$}.
\end{align}
\end{subequations}

The objective of this paper is to develop a new HDG method which is absolutely well-posed for the above mixed curl-curl formulation (\ref{original}). By an appropriate choice of numerical trace of the electric field on the faces, we prove the well-posedness of the HDG system and analyze its convergence property. The enhanced space for the primary variable of numerical trace was first introduced in \cite{Lehrenfeld10} for diffusion problem which numerically showed that the methods provide optimal order of convergence for all unknowns. Recently, the authors in \cite{QS13,QS16_1,QS16_2} applied this approach for linear elasticity problems, convection-diffusion problems and incompressible Navier-Stokes equations and gave the rigorous analysis. In this paper, we extend this approach to the Maxwell equations with two new HDG methods. We notice that the curl operator contains a non-trivial kernel space, which will be considered in designing the enhanced space for numerical trace of electric field on faces. We first present a HDG scheme based on simplicial mesh, the polynomials of degree $k+1$, $k$, $k$ ($k\geq 0$) are used to approximate $\bfu,\nabla \times \bfu$ and $p$ respectively. An enhanced subspace of polynomials of degree $k+1$ is used to approximate the numerical trace of the electric field on the faces and polynomials of degree $k+1$ are used to approximate the numerical trace of the Lagrange multiplier on the faces. By carefully designing the numerical flux, we obtain optimal convergence for the discrete $H({\rm curl})$-norm error and discrete $H({\rm div})$-norm error in electric field $\bfu$, and the $L^2$-norm error in $\bfw$. The duality argument is applied to derive the optimal convergence for the $L^2$-norm error in the electric field. In addition, thanks to the specially chosen stabilization parameters, we can obtain globally divergence free approximation for $\boldsymbol{u}$ and the errors for $\boldsymbol{u}$ and $\boldsymbol{w}$ are independent of the Lagrange multiplier $p$. Up to our best knowledge, this is the first error estimate for the mixed curl-curl formulation of the Maxwell's problem (\ref{original_pde_0}) to be independent of the Lagrange multiplier $p$. Furthermore, we consider a modified scheme by adding an additional stabilization term in the flux so that the method can maintain all convergence results mentioned above. Similar as our work in \cite{QS13,QS16_1,QS16_2}, the analysis is valid for general polyhedral meshes. Nevertheless, the errors do depend on the regularity of the pseudo variable $p$. Finally it is worth to mention that both methods are hybridizable in the sense that the global unknowns are the numerical traces. From the point of view of global degrees of freedom, this HDG method provides optimal convergent approximations to the electric field and achieves superconvergence for the electric field without postprocessing.

The rest of paper is organized as follows. In Section \ref{section2}, we introduce our HDG method for the problem (\ref{original}) and illustrate the well-posedness of both methods. In Section \ref{section3}, we present the adjoint problem and give the main a priori error estimates. The detailed proof of the main results is provided in Section \ref{section4} which includes both cases for the HDG schemes on simplicial mesh and general polyhedral mesh.

\section{Preliminaries and the HDG method}\label{section2}

In this section we will present the detail formulation for both methods. From now on we use $\text{HDG}_s$ to denote the first method (on simplicial mesh) and $\text{HDG}_g$ to denote the second method (on general polyhedral mesh). 
To define the $\text{HDG}_s$ methods, we consider conforming triangulation $\calT_h$ of $\Omega$ made of shape-regular {simplicial elements}. 
We denote by $\calE_h$ the set of all faces $F$ of all elements $K\in \calT_h$ 
and set $\partial \calT_h:=\{\partial K:K\in \calT_h\}$.
For scalar-valued functions $\phi$ and $\psi$, we write
\[
 (\phi,\psi)_{\calT_h}:=\sum_{K\in\calT_h}(\phi,\psi)_K, \, \, \langle\phi,\psi\rangle_{\partial\calT_h}
 :=\sum_{K\in\calT_h}\langle\phi,\psi \rangle_{\partial K}.
\]
Here $(\cdot,\cdot)_D$ denotes the integral over
the domain $D\subset \Real^d$, and $\langle \cdot,\cdot \rangle_D$ denotes the
integral over $D\subset \Real^{d-1}$. For vector-valued and
matrix-valued functions, a similar notation is taken. For example, for
vector-valued functions, we write
$(\bfphi,\bfpsi)_{\calT_h}:=\sum_{i=1}^n(\phi_i,\psi_i)_{\calT_h}$. Finally, for a vector-valued function $\bfphi$, on any interface $F \in \calE_h$ we use $\bfphi^n, \bfphi^t$ to denote the normal and tangential components of $\bfphi$ respectively. Throughout the paper we use the standard notations and definitions for Sobolev spaces (see, e.g., Adams \cite{Adams}).

Like all other HDG schemes, to define the HDG method for 
the problem, we need to introduce some new unknowns called \emph{numerical traces} on the \emph{skeleton} of the mesh, $\calE_h$. In the case of Maxwell's equations \eqref{original}, we need to impose two numerical traces into our HDG formulation. Namely, our HDG method seeks an approximation 
$(\bfw_h, \bfu_h, p_h, \widehat{\bfu}^t_h, \widehat{p}_h)\in \bfW_h \times \bfV_h \times Q_h \times
\bfM^{t,\bfg}_h \times M^0_h$ to the exact solution 
$(\bfw|_{\mathcal{T}_h}, \bfu|_{\mathcal{T}_h}, p|_{\mathcal{T}_h},\bfu^t|_{\mathcal{E}_h}, p|_{\calE_h})$
where the finite dimensional spaces are defined as: $(k \ge 0)$
\begin{alignat*}{3}
 \bfW_h&:=\{\bfr \in \bfL^2(\Omega): \;\bfr|_K\in \bfP_k(K), \,\, \forall K\in \calT_h\},\\
 \bfV_h&:=\{\bfv \in \boldsymbol{L}^2(\Omega):\;\bfv|_K\in \bfP_{k+1}(K), \,\, \forall K\in \calT_h\},\\
 Q_h&:=\{q \in L^2(\Omega):\;q|_K \in P_{k}(K), \,\, \forall K\in \calT_h\},\\
 \bfM^{t}_h&:=\{\bfmu \in \boldsymbol{L}^2(\calE_h):\;\bfmu|_F\in \bfM^t(F), \, \forall F\in \calE_h \},\\
  \bfM^{t, \bfg}_h&:=\{\bfmu \in  \bfM^{t}_h:\, \bfmu\times \bfn|_{\partial \Omega} = \bfP_{\bfM} \bfg \},\\
 M_h^{0}&:=\{\zeta \in L^2(\calE_h):\; \zeta|_F \in P_{k+1}(F), \,\, \forall F \in \calE_h, \,\, \zeta|_{\partial \Omega}=0\},
\end{alignat*}
where  $\boldsymbol{L}^2(\Omega) = [L^2(\Omega)]^3$, $\boldsymbol{L}^2(\calE_h) = [L^2(\calE_h)]^3$, for each $F \in \calE_h$, the local space $\bfM^t(F)$ is defined as
$\bfM^t(F):= [\bfP_k(F) + \nabla \widetilde{P}_{k+2}(F)]^t$, $\widetilde{P}_{k+2}(F)$ denotes the set of homogeneous polynomials of degree $k+2$ on $F$. Obviuously, $\bfM^t(F) \subset \bfP_{k+1}(F)$. Here, $\bfP_{\bfM}$ denotes the orthogonal $L^2$-projection from $L^2(\calE_h)$ onto $\bfM^t_h$, $P_k(D)$ denotes the set of polynomials of total degree at most $k \geq 0$
defined on $D$, $\bfP_k(D)$ denotes the set of vector-valued functions whose $d$
components lie in $P_k(D)$. We seek an approximation $(\bfw_h, \bfu_h, p_h, \widehat{\bfu}^t_h, \widehat{p}_h)\in \bfW_h \times \bfV_h \times Q_h \times
\bfM^{t,\bfg}_h \times M^0_h$ satisfying:
\begin{subequations}\label{HDG_form}
\begin{align}
\label{HDG_form_1}
(\bfw_h, \bfr)_{\calT_h} - (\bfu_h , \nabla \times \bfr)_{\calT_h} + \langle \widehat{\bfu}^t_h \times \bfn \, , \bfr 
\rangle_{\partial \calT_h} & = 0, \\
\label{HDG_form_2}
( \bfw_h, \nabla \times \bfv)_{\calT_h} - (p_h, \nabla \cdot \bfv)_{\calT_h} + \langle \widehat{\bfw}_h, \bfv \times \bfn 
\rangle_{\partial \calT_h} + \langle \widehat{p}_h, \bfv \cdot \bfn \rangle_{\partial \calT_h} & = (\bff, \bfv)_{\calT_h}, \\
\label{HDG_form_3}
-(\bfu_h, \nabla q)_{\calT_h} + \langle \widehat{\bfu}_h^n \cdot \bfn \, , q  \rangle_{\partial \calT_h} &  = 0,\\
\label{HDG_form_4}
\langle \widehat{\bfw}_h \times \bfn \, , \bfeta \rangle_{\partial \calT_h} & = 0, \\
\label{HDG_form_5}
\langle \widehat{\bfu}^n_h \cdot \bfn \, , \zeta \rangle_{\partial \calT_h}  & = 0, \\
\intertext{for all $(\bfr, \bfv, q, \bfeta, \zeta) \in \bfW_h \times \bfV_h \times Q_h \times \bfM^{t,0}_h \times M^0_h$, 
where the \emph{numerical flux} is defined as}
\label{HDG_trace_w}
\widehat{\bfw}_h = \bfw_h + \tau_t (\bfP_{\bfM} \bfu^t_h - \widehat{\bfu}^t_h) \times \bfn,  \qquad\qquad\\
\label{HDG_trace_u}
\widehat{\bfu}^n_h \cdot \bfn = \bfu_h \cdot \bfn. \qquad \qquad\qquad \
\end{align}
\end{subequations}

The \emph{stabilization parameters} $\tau_t$ is defined on each $F \in \partial \calT_h$. For the sake of simplicity, we assume that $\tau_t$ is a non-negative constant which will be determined later from the analysis. 

To obtain the second method $\text{HDG}_g$, we only need to make two changes: (1) $\mathcal{T}_h$ can be any conforming polyhedral triangulation; (2) Adding another stabilization term in the the last governing equation \eqref{HDG_trace_u}:
\begin{equation}\label{new_trace_u}
\widehat{\bfu}^n_h \cdot \bfn = \bfu_h \cdot \bfn + \tau_n (p_h - \widehat{p}_h). 
\end{equation}
In this case we require that $\tau_n$ is non-negative on each interface $F \in \partial \mathcal{T}_h$. 

For the $\text{HDG}_s$ method, the governing equation \eqref{HDG_trace_u} and \eqref{HDG_form_5} implies that $\boldsymbol{u}_h \in \boldsymbol{H}({\rm div}, \Omega)$. In addition, if we apply integrating by parts on \eqref{HDG_form_3} and take $q = \nabla \cdot \boldsymbol{u}_h$, we can conclude that  
\begin{equation}\label{divfree_u}
\nabla \cdot \boldsymbol{u}_h = 0.
\end{equation}
In addition to this globally divergence free property, it can be shown that the error estimates are independent of the pseudo variable $p$. More details will be discussed in Section 4.

As a build-in feature of the HDG methods (cf. \cite{Cockburn09}), the unknowns $\bfw_h, \bfu_h, p_h$ in (\ref{HDG_form}) can be eliminated to obtain a formulation only with $\widehat{\bfu}^t_h$ and $\widehat{p}_h$  as global unknowns. It is worth to mention that on general polyhedral meshes, it is necessary to use the enhanced numerical flux \eqref{new_trace_u} in order to ensure the solvability of the system. Consequently, $\boldsymbol{u}_h$ is no longer globally divergence free.

To end this section, we present the well-posedness result of the both scheme which can be summarized as follows:

\begin{lemma}\label{well-posedness}
If the stabilization parameter $\tau_t$ is positive everywhere, for $k \ge 0$, the $\text{HDG}_s$ system \eqref{HDG_form} is well-posed, and the approximation $\boldsymbol{u}_h$ is globally divergence free.

In addition, if $\mathcal{T}_h$ is any conforming polyhedral triangulation and $\tau_t > 0, \tau_n > 0$, then $\text{HDG}_g$ system \eqref{HDG_form_1} - \eqref{HDG_trace_w}, \eqref{new_trace_u} is also well-posed.  
\end{lemma}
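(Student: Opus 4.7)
Since both HDG systems are square and finite dimensional, well-posedness reduces to uniqueness: if $\bff=\bfzero$ and $\bfg=\bfzero$ force the discrete solution to be zero, existence follows automatically. The plan is therefore to set the data to zero and test each of the five equations of \eqref{HDG_form} with the unknowns themselves in order to produce an energy identity.

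First I would take $\bfr=\bfw_h$ in \eqref{HDG_form_1}, $\bfv=\bfu_h$ in \eqref{HDG_form_2}, $q=p_h$ in \eqref{HDG_form_3}, $\bfeta=\widehat{\bfu}_h^t\in\bfM_h^{t,0}$ (allowed since $\bfg=\bfzero$) in \eqref{HDG_form_4}, and $\zeta=\widehat{p}_h$ in \eqref{HDG_form_5}. Element-wise integration by parts in the $\nabla\times$ and $\nabla\cdot$ volume terms produces boundary contributions that combine with the numerical-flux definitions \eqref{HDG_trace_w}--\eqref{HDG_trace_u} (and \eqref{new_trace_u} for $\text{HDG}_g$). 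The key observation is that $\bfw_h\times\bfn|_F\in\bfP_k(F)^t\subset\bfM^t(F)$, so the $L^2$-projection $\bfP_{\bfM}$ may be inserted without cost in the cross-terms. Summing and using the single-valuedness of $\widehat{\bfu}_h^t$, $\widehat{p}_h$, and $\widehat{\bfw}_h$ across faces, all mixed terms should cancel, leaving
\begin{equation*}
\|\bfw_h\|_{\calT_h}^2+\tau_t\,\|(\bfP_{\bfM}\bfu_h^t-\widehat{\bfu}_h^t)\times\bfn\|_{\partial\calT_h}^2=0
\end{equation*}
in the $\text{HDG}_s$ case, with the extra term $\tau_n\|p_h-\widehat{p}_h\|_{\partial\calT_h}^2$ appearing for $\text{HDG}_g$. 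Positivity of $\tau_t$ (and $\tau_n$) then forces $\bfw_h=\bfzero$, $\bfP_{\bfM}\bfu_h^t=\widehat{\bfu}_h^t$ on every face, and (in the general case) $\widehat{p}_h=p_h$.

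Next I would recover $\bfu_h$. Plugging $\bfw_h=\bfzero$ back into \eqref{HDG_form_1} and integrating by parts, the identity $\widehat{\bfu}_h^t=\bfP_{\bfM}\bfu_h^t$ shows that $(\nabla\times\bfu_h,\bfr)_{\calT_h}=0$ for all $\bfr\in\bfW_h$; since $\nabla\times\bfu_h|_K\in\bfP_k(K)$, this gives $\nabla\times\bfu_h=\bfzero$ element-wise. For $\text{HDG}_s$, \eqref{HDG_trace_u} together with \eqref{HDG_form_5} yields $\bfu_h\in\bfH(\dive,\Omega)$ and the argument following \eqref{divfree_u} gives $\nabla\cdot\bfu_h=0$; the enhanced space $\bfM^t(F)=[\bfP_k(F)+\nabla\widetilde P_{k+2}(F)]^t$ is precisely what is needed so that the matching condition $\bfP_{\bfM}\bfu_h^t=\widehat{\bfu}_h^t$ (single-valued, zero on $\partial\Omega$) supplies enough tangential continuity of $\bfu_h$ across faces. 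Combined with simple connectedness of $\Omega$ and the vanishing tangential trace on $\partial\Omega$, a standard Poincaré-type argument (as in the $\bfH(\curl)\cap\bfH(\dive)$ theory) then yields $\bfu_h=\bfzero$. For $\text{HDG}_g$, the additional term $\tau_n(p_h-\widehat{p}_h)$ in \eqref{new_trace_u} compensates for the absence of $\bfH(\dive)$-conformity by delivering $\widehat{p}_h=p_h$, which lets one repeat the same type of argument in the broken setting. Once $\bfu_h=\bfzero$ and $\bfw_h=\bfzero$, equation \eqref{HDG_form_2} collapses to $-(p_h,\nabla\cdot\bfv)_{\calT_h}+\langle\widehat{p}_h,\bfv\cdot\bfn\rangle_{\partial\calT_h}=0$ for all $\bfv\in\bfV_h$; choosing $\bfv$ so that $\nabla\cdot\bfv=p_h$ on each $K$ with prescribed normal traces (possible since $k+1\ge k$ and $k+1$ on faces matches $M_h^0$) forces $p_h=0$ and $\widehat{p}_h=0$, after which $\widehat{\bfu}_h^t=\bfP_{\bfM}\bfu_h^t=\bfzero$.

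The hard part is unquestionably Step 3: verifying that the specific choice of enhanced local space $\bfM^t(F)$ (respectively, the stabilization $\tau_n$) provides just enough inter-element control on $\bfu_h$ to let one conclude $\bfu_h=\bfzero$ from $\nabla\times\bfu_h=\bfzero$ and $\nabla\cdot\bfu_h=0$. This is where the non-trivial kernel of $\curl$ enters the design of $\bfM^t$, and it is the geometric/conforming content that distinguishes this HDG formulation from a straightforward mimicking of the diffusion case.
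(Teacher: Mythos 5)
Your overall strategy (reduce to uniqueness, test with the discrete solution itself to get an energy identity, then recover the unknowns one by one) is exactly the paper's, and the first stage is fine: you correctly obtain $\bfw_h=\bfzero$, $\bfP_{\bfM}\bfu_h^t=\widehat{\bfu}_h^t$ (and $p_h=\widehat p_h$ for $\text{HDG}_g$), and then $\nabla\times\bfu_h=\bfzero$ element-wise. The genuine gap is precisely the step you flag as ``the hard part'' and then do not carry out. From $\nabla\times\bfu_h=\bfzero$, $\nabla\cdot\bfu_h=0$ and the \emph{projected} tangential matching $\bfP_{\bfM}\bfu_h^t=\widehat{\bfu}_h^t$ alone, $\bfu_h$ is not known to lie in $\bfH(\mathrm{curl},\Omega)$, so no ``standard Poincar\'e-type argument'' applies: a discrete field can be element-wise curl- and divergence-free with single-valued normal trace and single-valued \emph{projected} tangential trace and still be nonzero, unless one shows the projection loses nothing. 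The paper closes this with a short but essential observation: since $\nabla\times\bfu_h=\bfzero$ and $\bfu_h|_K\in\bfP_{k+1}(K)$, one can write $\bfu_h=\nabla\phi_h$ with $\phi_h\in P_{k+2}(K)$ on each element, whence $\bfu_h^t|_F\in[\nabla P_{k+2}(F)]^t\subset\bfM^t(F)$ and therefore $\bfP_{\bfM}\bfu_h^t=\bfu_h^t$ \emph{exactly}. Only then is $\bfu_h^t=\widehat{\bfu}_h^t$ single-valued, so that $\bfu_h\in\bfH(\mathrm{curl},\Omega)\cap\bfH(\mathrm{div},\Omega)$ with vanishing curl, divergence and tangential boundary trace, and the uniqueness result for simply connected $\Omega$ gives $\bfu_h=\bfzero$. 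This is the one place where the definition $\bfM^t(F)=[\bfP_k(F)+\nabla\widetilde{P}_{k+2}(F)]^t$ is actually used; your write-up asserts its sufficiency without supplying this argument.

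A secondary issue: your recovery of $p_h$ is under-specified. One cannot in general choose $\bfv|_K\in\bfP_{k+1}(K)$ with $\nabla\cdot\bfv=p_h$ \emph{and} independently prescribed normal traces, because of the compatibility constraint $\int_K\nabla\cdot\bfv=\int_{\partial K}\bfv\cdot\bfn$; with vanishing normal trace only mean-zero divergences are reachable. The paper instead integrates \eqref{eq_p} by parts to obtain $(\nabla p_h,\bfv)_{\calT_h}+\langle\widehat p_h-p_h,\bfv\cdot\bfn\rangle_{\partial\calT_h}=0$ and uses the Raviart--Thomas degrees of freedom on each simplex to lift $\nabla p_h$ in the volume and $\widehat p_h-p_h$ on the faces simultaneously, yielding $\nabla p_h=0$ and $p_h=\widehat p_h$, hence $p_h=\widehat p_h=0$ from the boundary condition. (For $\text{HDG}_g$ the energy identity already gives $p_h=\widehat p_h$, and taking $\bfv=\nabla p_h$ suffices; your remark about the role of $\tau_n$ there is in the right spirit.)
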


\begin{proof}
The divergence free property of $\boldsymbol{u}_h$ is proved in the above discussion. Since \eqref{HDG_form} is a linear square system, it suffices to show that if all the given data vanish, i.e. $\bff = \bfg = 0$, then we will get zero solution for all unknowns. By taking $(\bfr, \bfv, q, \bfeta, \zeta) = (\bfw_h, \bfu_h, p_h, \widehat{\bfu}^t_h, \widehat{p}_h)$ in the equations \eqref{HDG_form_1}-\eqref{HDG_form_5} and adding these equations, after some algebraic manipulation, we have:
\[
(\bfw_h, \bfw_h)_{\calT_h} + \langle \tau_t (\bfP_{\bfM}{\bfu}^t_h \times \bfn  - \widehat{\bfu}^t_h \times \bfn) , {\bfu}_h \times \bfn - \widehat{\bfu}^t_h \times \bfn \rangle_{\partial \calT_h} = 0.
\]
Immediately we obtain:
\[
\bfw_h = 0 \quad \text{and}  \quad \bfP_{\bfM} {\bfu}^t_h - \widehat{\bfu}^t_h = 0 \quad \text{on $\partial \calT_h$}.
\]

\[
(\bfw_h, \bfw_h)_{\calT_h} - \langle \bfu_h \cdot \bfn - \widehat{\bfu}^n_h \cdot \bfn \, , p_h - \widehat{p}_h \rangle_{\partial \calT_h} + \langle \widehat{\bfu}^t_h \times \bfn - \bfu_h \times \bfn \, , \bfw_h - \widehat{\bfw}_h \rangle_{\partial \calT_h} = 0.
\]
Now by applying the definition of the numerical flux \eqref{HDG_trace_w} and \eqref{HDG_trace_u}, we have:
\begin{equation}\label{energy_id}
(\bfw_h, \bfw_h)_{\calT_h} + \langle \tau_t (\bfP_{\bfM}{\bfu}^t_h \times \bfn  - \widehat{\bfu}^t_h \times \bfn) , {\bfu}_h \times \bfn - \widehat{\bfu}^t_h \times \bfn \rangle_{\partial \calT_h} = 0.
\end{equation}

It is clear that 
$$
\langle \tau_t (\bfP_{\bfM}{\bfu}^t_h \times \bfn  - \widehat{\bfu}^t_h \times \bfn) , {\bfu}_h \times \bfn - \widehat{\bfu}^t_h \times \bfn \rangle_{\partial \calT_h} = \langle \tau_t (\bfP_{\bfM}{\bfu}^t_h  - \widehat{\bfu}^t_h ), \bfP_{\bfM}{\bfu}^t_h - \widehat{\bfu}^t_h  \rangle_{\partial \calT_h}.
$$
Since we assume that $\tau_t$ is strictly positive on $\partial \calT_h$, the above two identities imply that 
\[
\bfw_h = 0 \quad \text{and} \quad \bfP_{\bfM} {\bfu}^t_h - \widehat{\bfu}^t_h = 0 \quad \text{on $\partial \calT_h$}.
\]
By these results together with \eqref{HDG_trace_w} we have
\[
\widehat{\bfw}_h = 0.
\]
Next we rewrite \eqref{HDG_form_1} by integrating by parts on the second term to have:
\[
(\bfw_h, \bfr)_{\calT_h} - (\nabla \times \bfu_h, \bfr)_{\calT_h} - \langle (\bfu_h - \widehat{\bfu}^t_h) \times \bfn \, , \bfr \rangle_{\partial \calT_h} = 0.
\]
If we take $\bfr = \nabla \times \bfu_h$, by the above result and the orthogonal property of $L^2$-projection $\bfP_{\bfM}$, we have:
\[
\nabla \times \bfu_h = 0.
\]
Since $\bfw_h = 0, \widehat{\bfw}_h = 0$, now \eqref{HDG_form_2} can be written as (given $\bff = 0$):
\begin{equation}\label{eq_p}
-(p_h, \nabla \cdot \bfv)_{\calT_h} + \langle \widehat{p}_h, \bfv \cdot \bfn \rangle_{\partial \calT_h} = 0.
\end{equation}
By integrating by parts, we have
\begin{equation}\label{lifting_a}
(\nabla p_h, \bfv)_{\calT_h} + \langle \widehat{p}_h - p_h, \bfv \cdot \bfn \rangle_{\partial \calT_h} = 0, \quad \text{for all $\boldsymbol{v} \in \boldsymbol{V}_h$}.
\end{equation}
On each $K \in \mathcal{T}_h$, we choose $\boldsymbol{v} \in \boldsymbol{P}_{k+1}(K)$ to satisfy the following equations:
\begin{align*}
(\boldsymbol{v}, \tilde{\boldsymbol{v}})_K & = (\nabla p_h, \tilde{\boldsymbol{v}})_K, \quad && \text{for all $\tilde{\boldsymbol{v}} \in \boldsymbol{P}_k(K)$}, \\
\langle \boldsymbol{v} \cdot \bfn \, , \, \mu \rangle_F & = \langle  \widehat{p}_h - p_h \, , \, \mu \rangle_F, && \text{for all $\mu \in P_k(F), \, F \in \partial K$}.
\end{align*}
The existence of such $\boldsymbol{v}$ is ensured if $K$ is simplex. For instance, by the above degrees of freedom uniquely define a function $\boldsymbol{v} \in \bf{RT}_k(K) \subset \boldsymbol{P}_{k+1}(K)$. Here ${\bf RT}_k(K)$ denotes the Raviert-Thomas space of degree $k$ on the simplex $K$. By this special choice of $\boldsymbol{v}$, \eqref{lifting_a} gives
\[
(\nabla p_h, \nabla p_h)_{\calT_h} + \langle \widehat{p}_h - p_h, \widehat{p}_h - p_h \rangle_{\partial \calT_h} = 0
\]
This implies that 
\[
\nabla p_h = 0, \quad \widehat{p}_h - p_h = 0,
\]
and hence $p_h = \widehat{p}_h = 0$.

To conclude, recall that $\nabla \times \bfu_h = 0$ and $\bfu_h \in P_{k+1}(K)$ on each $K \in \calT_h$, this means that there exists a piecewise polynomial $\phi_h \in P_{k+2}(K)$ for each $K \in \calT_h$ such that 
\[
\bfu_h = \nabla \phi_h \quad \text{on each $K \in \calT_h$}.
\]
The above fact together with the definition of the space $\bfM^t_h$ implies that
\[
\bfP_{\bfM} (\bfu^t_h) = \bfP_{\bfM} ( \bfn \times\nabla \phi_h \times \bfn) = (\nabla \phi_h)^t = \bfu^t_h,
\]
since on each interface $F \in \calE_h$ we have $(\bfn \times \nabla \phi_h \times \bfn)|_F \in [\nabla P_{k+2}(F)]^t \subset \bfM^t(F)$. This implies that 
\[
\bfu^t_h= \widehat{\bfu}^t_h \quad {\rm on} \ \partial \calT_h, 
\]
which, together with the facts that 
\[
\bfu_h \cdot \bfn = \widehat{\bfu}^n_h \cdot \bfn \quad {\rm on} \ \partial \calT_h,
\]
and both $\widehat{\bfu}^t_h, \widehat{\bfu}^n_h \cdot \bfn$ are single values on $\calE_h$, implies that $\bfu_h \in \bfH(\text{curl}, \Omega) \cap \bfH(\text{div}, \Omega)$ satisfying:
\begin{align*}
\nabla \times \bfu_h &= 0 \qquad  \text{in $\Omega$,}\\
\nabla \cdot \bfu_h & = 0 \qquad  \text{in $\Omega$,} \\
\bfu_h \times \bfn & = 0 \qquad  \text{on $\partial \Omega$}.
\end{align*}
This yields $\bfu_h=0$ by the assumption of the domain $\Omega$ (cf. \cite{Monk2003}). Therefore, we also have $\widehat{\bfu}^t_h=0$ on $\partial \calT_h$. This completes the proof of the solvability of $\text{HDG}_s$.

For the general method $\text{HDG}_g$, we briefly sketch the proof in this case since it follows similar argument as the first case. First, due to the fact that the stabilization parameter $\tau_n$ is positive, the energy identity \ref{energy_id} becomes:
\begin{equation}\label{energy_id_new}
(\bfw_h, \bfw_h)_{\calT_h} + \langle \tau_n (p_h - \widehat{p}_h) \, , p_h - \widehat{p}_h \rangle_{\partial \calT_h} + \langle \tau_t (\bfP_{\bfM}{\bfu}^t_h \times \bfn  - \widehat{\bfu}^t_h \times \bfn) , {\bfu}_h \times \bfn - \widehat{\bfu}^t_h \times \bfn \rangle_{\partial \calT_h} = 0,
\end{equation}
which implies that
\[
(\bfw_h, \bfw_h)_{\calT_h} + \langle \tau_n (p_h - \widehat{p}_h) \, , p_h - \widehat{p}_h \rangle_{\partial \calT_h} + \langle \tau_t (\bfP_{\bfM}{\bfu}^t_h  - \widehat{\bfu}^t_h ), \bfP_{\bfM}{\bfu}^t_h - \widehat{\bfu}^t_h  \rangle_{\partial \calT_h} = 0.
\]
Therefore, we have
\[
\bfw_h = 0, \quad p_h - \widehat{p}_h = 0, \quad \text{and} \quad \bfP_{\bfM} {\bfu}^t_h - \widehat{\bfu}^t_h = 0 \quad \text{on $\partial \calT_h$}.
\]
And by \eqref{HDG_trace_w} we have
\[
\widehat{\bfw}_h = 0.
\]
Consequently, \eqref{eq_p} is still valid in this case. Taking $\bfv = \nabla p_h$ and integrating by parts for \eqref{eq_p} and by the fact that $p_h - \widehat{p}_h = 0$ on $\partial \calT_h$, we can obtain:
\[
\nabla p_h = 0 \quad \text{on each $K \in \calT_h$}.
\]
This implies that $p_h$ is piecewise constant on $\calT_h$. By the fact that $p_h = \widehat{p}_h$ on $\calE_h$, we conclude that $p_h = 0$ on $\Omega$, and hence $\widehat{p}_h = 0$. So far, we have shown that $\bfw_h, p_h, \widehat{p}_h$ vanish. 

Finally, taking $q = \nabla \cdot \bfu_h$ in \eqref{HDG_form_3} and integrating by parts, we have
\[
(\nabla \cdot \bfu_h, \nabla \cdot \bfu_h)_{\calT_h} - \langle \bfu_h \cdot \bfn - \widehat{\bfu}_h \cdot \bfn \, , \nabla \cdot \bfu_h \rangle_{\partial \calT_h} = 0. 
\]
Since that
\[
\bfu_h \cdot \bfn - \widehat{\bfu}_h \cdot \bfn = -\tau_n (p_h - \widehat{p}_h) = 0 \quad {\rm on} \ \partial \calT_h,
\]
we have
\[
\nabla \cdot \bfu_h = 0 \quad \text{in $\Omega$}.
\]
After this point, by the same lines as for the first case we can show $\nabla \times \boldsymbol{u}_h = 0$ and then conclude that $\boldsymbol{u}_h, \widehat{\boldsymbol{u}}^t_h$ also vanish. This completes the proof for the general case. 
\end{proof}

\section{Main results}\label{section3}
In this section we present our main error estimates results. To state our main result, we need to introduce some notations.
We use $\|\cdot\|_{s, D}, |\cdot|_{s, D}$ to denote the usual norm and semi-norm on
the Sobolev space $H^s(D)$. We discard the first index $s$ if $s=0$. The norm $\|\cdot\|_{s, \calT_h}$ is the discrete norm defined as $\|\cdot\|_{s, \calT_h}:= \sum_{K \in \calT_h}\|\cdot\|_{s, K}$. We also define the discrete norm $\|\cdot\|_{\partial \calT_h}:= \sum_{F \in \partial \calT_h} \|\cdot\|_F$.
To derive an $L^2$ error estimate of $\|\bfu - \bfu_h\|_{\Omega}$, we need a regularity assumption of an adjoint problem stated as follows: Find $(\bfphi, \bftheta, \sigma)$ satisfying

\begin{subequations}\label{adj_eq}
\begin{align}
\label{adj_1} 
\nabla \times \bfphi & = \bftheta \qquad \text{in $\Omega$}, \\
\label{adj_2} 
\nabla \times \bftheta - \nabla \sigma & = \bfe_u \quad \ \,  \text{in $\Omega$}, \\
\label{adj_3} 
\nabla \cdot \bfphi & = 0 \qquad   \text{in $\Omega$}, \\
\label{adj_4} 
\bfphi \times \bfn & = 0 \qquad \text{on $\partial \Omega$}, \\ 
\label{adj_5} 
\sigma & = 0 \qquad \text{on $\partial \Omega$}.
\end{align}
\end{subequations}
We assume the following regularity estimate holds for the solution of the above adjoint problem (cf. \cite{Bonito2013,Bonito2014,MWYS15}):
\begin{equation}\label{regularity}
\|\bfphi\|_{1+\alpha, \Omega} + \|\bftheta\|_{\alpha, \Omega} + \|\sigma\|_{\alpha, \Omega} \le C_{\text{reg}} \|\bfe_u\|_{\Omega},
\end{equation}
where $0<\alpha\leq 1$.

We are now ready to state our first main result for $\text{HDG}_s$ method.

\begin{theorem}\label{main_mod}
If the exact solution of (\ref{original}) satisfies $(\bfu, \bfw, p) \in [H^t(\Omega)]^3 \times [H^s(\Omega)]^3 \times [H^1_0(\Omega)\cap H^r(\Omega)]$ 
with $1 \le s \le k+1, \, 1 \le r \le k+1,\, 1 \le t \le k+2$. and $\tau_t = \mathcal{O}(\frac{1}{h})$, then for the numerical solution of the $\text{HDG}_s$ method, we have 
\begin{align*}
&\|\nabla \cdot (\bfu - \bfu_h)\|_{\calT_h} = 0, \\
&\|\bfw - \bfw_h\|_{\Omega} + \|\nabla \times (\bfu - \bfu_h)\|_{\calT_h} \le  C \Big( h^{t-1} \|\bfu\|_{t, \Omega} + h^s \|\bfw\|_{s, \Omega} \Big),
\end{align*}
for all $1 \le s \le k+1, 1 \le t \le k+2$. 

In addition, if the regularity \eqref{regularity} holds for the adjoint problem, then we have
\[
\|\bfu-\bfu_h\|_{\Omega} \le C \Big( h^{t+\alpha-1} \|\bfu\|_{t, \Omega} + h^{s+\alpha} \|\bfw\|_{s, \Omega} \Big),
\]
where $0<\alpha\leq 1$ is defined in \eqref{regularity}.
\end{theorem}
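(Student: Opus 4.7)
The first identity is immediate: the exact solution satisfies $\nabla \cdot \bfu = 0$ by \eqref{original_3}, while $\nabla \cdot \bfu_h = 0$ element-wise by \eqref{divfree_u}. All the real work lies in the curl and $L^2$ bounds, and the plan is a standard projection-based HDG error analysis adapted to the enhanced face space $\bfM^t(F) = [\bfP_k(F) + \nabla \widetilde{P}_{k+2}(F)]^t$. I would introduce a triple HDG projection $\bfPi(\bfw,\bfu,p) = (\Pim_\bfW \bfw, \Pim_\bfV \bfu, \Pim_Q p)$ where $\Pim_\bfW$ and $\Pim_Q$ are the standard element-wise $L^2$-projections onto $\bfP_k(K)$ and $P_k(K)$, and $\Pim_\bfV\bfu \in \bfP_{k+1}(K)$ is a tailored projection whose interior moments kill $\nabla\times \bfW_h \oplus \nabla Q_h$ and whose trace moments enforce $\bfP_{\bfM}((\Pim_\bfV\bfu)^t) = \bfP_{\bfM}(\bfu^t)$ on each $F \in \partial K$, together with a matching condition on the normal component. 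A simplex-based dimension count, together with the fact that $\bfM^t(F) \subset \bfP_{k+1}(F)$ is exactly rich enough to absorb gradients of degree $k+2$, should yield unisolvency and the optimal approximation estimate $\|\bfu-\Pim_\bfV\bfu\|_K \le C h^t |\bfu|_{t,K}$ for $1 \le t \le k+2$.

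Next, I would write the error system for $(\delta^\bfw, \delta^\bfu, \delta^p, \delta^{\widehat\bfu}, \delta^{\widehat p}) := (\Pim_\bfW\bfw - \bfw_h,\, \Pim_\bfV\bfu - \bfu_h,\, \Pim_Q p - p_h,\, \bfP_{\bfM}\bfu^t - \widehat{\bfu}_h^t,\, \Pim_{M^0}p - \widehat{p}_h)$ obtained by subtracting \eqref{HDG_form} from the consistency equations satisfied by the projected exact solution. The defining orthogonalities of $\Pim_\bfV$ are designed so that all projection residuals cancel against $\nabla\times \bfW_h$ and $\bfM^t_h$, leaving only consistency terms driven by $\bfw - \Pim_\bfW\bfw$. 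Taking test functions equal to the error quantities themselves and summing the five equations, exactly as in the proof of Lemma~\ref{well-posedness}, produces the energy identity
\[
\|\delta^\bfw\|_\Omega^2 + \langle \tau_t(\bfP_{\bfM}(\delta^\bfu)^t - \delta^{\widehat\bfu}), \bfP_{\bfM}(\delta^\bfu)^t - \delta^{\widehat\bfu}\rangle_{\partial\calT_h} = (\text{consistency terms}).
\]
With $\tau_t = \mathcal{O}(1/h)$ and a scaled trace inequality, the right-hand side is controlled by $C(h^{t-1}\|\bfu\|_{t,\Omega} + h^s\|\bfw\|_{s,\Omega})^2$. A triangle inequality closes the bound on $\|\bfw - \bfw_h\|_\Omega$, and $\|\nabla\times(\bfu - \bfu_h)\|_{\calT_h}$ is then recovered by integrating \eqref{HDG_form_1} by parts element-wise and choosing $\bfr = \nabla\times(\Pim_\bfV\bfu - \bfu_h) \in \bfW_h$. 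Crucially, neither $p_h$ nor $\widehat{p}_h$ enters this energy identity, which is the structural reason why the curl and energy estimates are independent of the Lagrange multiplier $p$.

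For the $L^2$ bound on $\bfu - \bfu_h$, I would run an Aubin--Nitsche duality with the adjoint problem \eqref{adj_eq} driven by $\bfe_u := \bfu - \bfu_h$. Testing the primal error equations against the HDG projection $(\Pim_\bfW\bftheta, \Pim_\bfV\bfphi, \Pim_Q\sigma)$ of the dual solution, integration by parts together with the orthogonality properties of $\Pim_\bfV$ recasts $\|\bfe_u\|_\Omega^2$ as a sum of inner products between primal projection errors and adjoint projection errors; by the regularity \eqref{regularity} each adjoint projection factor carries an additional $h^\alpha$, while each primal factor carries the energy rate already established. Collecting terms and cancelling one power of $\|\bfe_u\|_\Omega$ yields the claimed $O(h^{t+\alpha-1} + h^{s+\alpha})$ bound.

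The main obstacle will be the construction and analysis of $\Pim_\bfV$: one has to show simultaneously that the tangential trace condition prescribed through $\bfP_{\bfM}$ is consistent with the chosen interior moment conditions on a simplex (i.e.\ no over- or under-determination), and that the resulting projection retains the full $h^{k+2}$ approximation order despite the non-standard face space. A related subtlety is that, in the duality step, the $p$-dependence of the dual problem must be shown to drop out; this should follow from $\nabla\cdot\bfphi = 0$ together with the normal-component matching enforced by \eqref{HDG_trace_u} and the globally divergence-free property \eqref{divfree_u} of $\bfu_h$, but verifying this cleanly is where the specific structure of $\text{HDG}_s$ (in particular $\tau_n \equiv 0$) is essential.
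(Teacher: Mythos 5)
Your overall architecture (error equations, energy identity, duality) matches the paper's, but the route you choose for the two technical pivots has genuine gaps. First, the tailored projection $\Pim_{\bfV}$ you propose does not exist: requiring $\bfP_{\bfM}((\Pim_{\bfV}\bfu)^t)=\bfP_{\bfM}(\bfu^t)$ on every face already imposes $4\,(\dim \bfM^t(F))=4\,[(k+1)(k+2)+(k+3)]$ conditions (e.g.\ $20$ for $k=0$), which exceeds $\dim\bfP_{k+1}(K)$ (e.g.\ $12$ for $k=0$) before you even add the normal-trace and interior moment conditions; for a general non-polynomial $\bfu$ these conditions are incompatible, so the ``main obstacle'' you flag is not merely delicate but fatal as stated. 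The paper sidesteps this entirely by taking the off-the-shelf $\bfPi^{k+1}_{\bf BDM}$ for $\bfu$ (matching only normal traces) and standard $L^2$-projections elsewhere, accepting a nonzero tangential mismatch $\tau_t\bfP_{\bfM}(\bfu^t-(\bfPi^{k+1}_{\bf BDM}\bfu)^t)$ on the right-hand side of the energy identity, which is then absorbed using $\tau_t=\mathcal{O}(1/h)$.

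Second, and independently of which projection is used, the energy identity forces you to estimate
\begin{equation*}
\langle \bfw-\bfPi_W\bfw\,,\,(\bfe_u^t-\bfP_{\bfM}\bfe_u^t)\times\bfn\rangle_{\partial\calT_h},
\end{equation*}
where $\bfe_u$ is the \emph{discrete} error, whose tangential trace is not in $\bfM^t_h$; a ``scaled trace inequality'' here produces $h^{-1/2}\|\bfe_u\|_{\calT_h}$, which is not controlled at this stage (and invoking the duality bound would be circular). The paper's resolution — and the real reason for the choice $\bfM^t(F)=[\bfP_k(F)+\nabla\widetilde P_{k+2}(F)]^t$ — is that one may replace $\bfe_u$ by $\bfe_u+\nabla\eta$ for any $\eta\in P_{k+2}(K)$ without changing this term, and then use the scaling estimate $\min_{\eta\in P_{k+2}(K)}\|\bfe_u+\nabla\eta\|_K\le Ch\|\nabla\times\bfe_u\|_K$ (injectivity of the curl on $\bfP_{k+1}(K)$ modulo gradients) to convert it into $Ch^{s}\|\bfw\|_{s,\Omega}\|\nabla\times\bfe_u\|_{\calT_h}$, with $\|\nabla\times\bfe_u\|_{\calT_h}$ in turn bounded by $\|\bfe_w\|_\Omega$ plus the stabilization term. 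The same device is needed again in the duality step for $\langle(\bfe_u^t-\bfP_{\bfM}\bfe_u^t)\times\bfn,\bftheta-\bfPi_W\bftheta\rangle_{\partial\calT_h}$. Your proposal never identifies this mechanism, so the central estimate of the theorem is not actually closed. (Your treatment of the first identity $\nabla\cdot(\bfu-\bfu_h)=0$ and the general shape of the Aubin--Nitsche step are fine; the paper additionally uses the $\bf{RT}$ projection of the divergence-free dual variable $\bfphi$ to make the terms involving $e_p-e_{\widehat p}$ and the normal traces vanish exactly, which is what keeps the final bound independent of $p$.)
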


As for the $\text{HDG}_g$ method on general polyhedral mesh, we have the following result:

\begin{theorem}\label{main_result}
If the exact solution of (\ref{original}) satisfies $(\bfu, \bfw, p) \in [H^t(\Omega)]^3 \times [H^s(\Omega)]^3 \times [H^1_0(\Omega)\cap H^r(\Omega)]$ 
with $1 \le s \le k+1, \, 1 \le r \le k+1,\, 1 \le t \le k+2$. Taking the stabilization parameters as $\tau_t = \mathcal{O}(\frac{1}{h}), \tau_n = \mathcal{O}(h)$, we have
\[
\|\bfw - \bfw_h\|_{\Omega} + \|\nabla \cdot (\bfu - \bfu_h)\|_{\calT_h} + \|\nabla \times (\bfu - \bfu_h)\|_{\calT_h} \le  C \Big( h^{t-1} \|\bfu\|_{t, \Omega} + h^{r}\|p\|_{r, \Omega} + h^s \|\bfw\|_{s, \Omega} \Big).
\]
In addition, if the regularity \eqref{regularity} holds for the adjoint problem, then we have
\[
\|\bfu-\bfu_h\|_{\Omega} \le C \Big( h^{t+\alpha-1} \|\bfu\|_{t, \Omega} + h^{r+\alpha} \|p\|_{r,\Omega} + h^{s+\alpha} \|\bfw\|_{s, \Omega} \Big),
\]
where $0<\alpha\leq 1$ is defined in \eqref{regularity}.
\end{theorem}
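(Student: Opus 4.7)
My plan follows the now-standard projection-based HDG error analysis, adapted to handle the loss of divergence-free structure on general polyhedral meshes. First, I would introduce canonical $L^2$ projections $\Pim_{\bfW}, \Pim_{\bfV}, \Pim_Q, \bfP_{\bfM}, P_M$ onto $\bfW_h, \bfV_h, Q_h, \bfM^t_h, M_h^0$ respectively, since the Raviart--Thomas trick employed for $\text{HDG}_s$ in Lemma \ref{well-posedness} is unavailable on polyhedra. Setting
$\bfe_{\bfw} = \Pim_{\bfW}\bfw - \bfw_h$, $\bfe_{\bfu} = \Pim_{\bfV}\bfu - \bfu_h$, $e_p = \Pim_Q p - p_h$, $\bfe^t_{\widehat{\bfu}} = \bfP_{\bfM}\bfu^t - \widehat{\bfu}^t_h$ and $e_{\widehat{p}} = P_M p - \widehat{p}_h$, I would subtract \eqref{HDG_form_1}--\eqref{HDG_form_5} together with \eqref{HDG_trace_w}, \eqref{new_trace_u} from the consistency identity satisfied by the exact solution to obtain error equations whose right-hand sides carry only projection-error data.

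Next I would test the error equations with the errors themselves, mirroring the argument that produced \eqref{energy_id_new}. This yields a perturbed energy identity of the form
\begin{equation*}
\|\bfe_{\bfw}\|_{\calT_h}^2
+ \langle \tau_n(e_p - e_{\widehat{p}}), e_p - e_{\widehat{p}}\rangle_{\partial \calT_h}
+ \langle \tau_t(\bfP_{\bfM}\bfe^t_{\bfu} - \bfe^t_{\widehat{\bfu}}), \bfP_{\bfM}\bfe^t_{\bfu} - \bfe^t_{\widehat{\bfu}}\rangle_{\partial \calT_h}
= \mathcal{R},
\end{equation*}
where $\mathcal{R}$ bundles the projection errors for $\bfu$, $\bfw$, $p$ on elements and faces. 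Using standard approximation bounds $\|\bfu - \Pim_{\bfV}\bfu\|_{\calT_h} \le C h^t\|\bfu\|_{t,\Omega}$, $\|\bfw - \Pim_{\bfW}\bfw\|_{\calT_h} \le C h^s\|\bfw\|_{s,\Omega}$, $\|p - \Pim_Q p\|_{\calT_h} \le C h^r\|p\|_{r,\Omega}$, together with trace and inverse inequalities, the scalings $\tau_t = \mathcal{O}(1/h)$ and $\tau_n = \mathcal{O}(h)$ are precisely what make the contributions $\tau_t^{1/2}\|\bfu^t - \bfP_{\bfM}\bfu^t\|_{\partial \calT_h}$ and $\tau_n^{1/2}\|p - \Pim_Q p\|_{\partial \calT_h}$ scale as $h^{t-1}$ and $h^r$ respectively, yielding the claimed $\|\bfe_{\bfw}\|$ bound. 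The curl and divergence estimates are then produced by integrating \eqref{HDG_form_1} and \eqref{HDG_form_3} by parts to express $\nabla \times \bfe_{\bfu}$ and $\nabla \cdot \bfe_{\bfu}$ in terms of $\bfe_{\bfw}$ plus the boundary jumps already controlled by the energy identity, finishing the first assertion by the triangle inequality.

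For the $L^2$ estimate on $\bfu - \bfu_h$, I would apply an Aubin--Nitsche duality argument using \eqref{adj_eq} with datum $\bfe_u = \bfu - \bfu_h$. Testing the error equations against the projections of $(\bfphi, \bftheta, \sigma)$ and invoking the regularity \eqref{regularity} to absorb $\|\bfphi\|_{1+\alpha} + \|\bftheta\|_\alpha + \|\sigma\|_\alpha$ by $C_{\text{reg}}\|\bfe_u\|_{\Omega}$, each projection factor supplies an extra $h^\alpha$. Combined with the energy-type bounds on the primal errors, this produces the superconvergent rates $h^{t+\alpha-1}$, $h^{r+\alpha}$, $h^{s+\alpha}$.

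The principal obstacle is the treatment of the $p$-dependence: in the simplicial case the Raviart--Thomas test-function employed in Lemma \ref{well-posedness} decouples $p$ from $\bfu$, which is what allows Theorem \ref{main_mod} to be entirely $p$-free; on general polyhedra this decoupling is lost and the $\tau_n(p_h - \widehat{p}_h)$ stabilization forces the $p$-error into both the energy identity and the duality pairing. The delicate bookkeeping will be to verify that, with the calibrated choice $\tau_n = \mathcal{O}(h)$, the cross terms $\langle \bfu^n - \widehat{\bfu}^n_h, p - \widehat{p}_h\rangle_{\partial \calT_h}$ arising from \eqref{new_trace_u} can be absorbed via Young's inequality without introducing a negative power of $h$, so that the $p$ contribution collapses cleanly to the single term $h^r\|p\|_{r,\Omega}$ stated in the theorem.
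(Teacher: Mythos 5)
Your architecture coincides with the paper's own proof: replace the BDM/RT projections by the $L^2$ projection $\bfPi_V$ onto $\bfV_h$, derive the perturbed energy identity whose left-hand side carries the three coercive terms $\|\bfe_w\|^2_\Omega$, $\tau_t\|\bfP_{\bfM}\bfe_u^t-\bfe_{\widehat{u}^t}\|^2_{\partial\calT_h}$ and $\tau_n\|e_p-e_{\widehat{p}}\|^2_{\partial\calT_h}$, balance $\tau_n=\mathcal{O}(h)$ against the two competing face terms ($\tau_n^{-1/2}\|\bfu-\bfPi_V\bfu\|_{\partial\calT_h}$ wants $\tau_n$ large, $\tau_n^{1/2}\|P_Mp-\Pi_Qp\|_{\partial\calT_h}$ wants it small), and finish with an Aubin--Nitsche argument in which the $p$-dependent pairings no longer vanish. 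This is exactly Lemmas \ref{energy_id_g}--\ref{estimate_eu_g}.

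The one place the plan as written would stall is the assertion that the residual $\mathcal{R}$ is handled by ``standard approximation bounds together with trace and inverse inequalities.'' The term $\langle \bfw-\bfPi_W\bfw,\,(\bfe_u^t-\bfe_{\widehat{u}^t})\times\bfn\rangle_{\partial\calT_h}$ contains the piece $\langle \bfw-\bfPi_W\bfw,\,(\bfe_u^t-\bfP_{\bfM}\bfe_u^t)\times\bfn\rangle_{\partial\calT_h}$, and $\bfe_u^t-\bfP_{\bfM}\bfe_u^t$ is \emph{not} one of the quantities controlled by the coercive terms of the energy identity; a direct Cauchy--Schwarz plus a trace/inverse estimate yields $Ch^{s-1}\|\bfw\|_{s,\Omega}\|\bfe_u\|_{\calT_h}$, which is circular because $\|\bfe_u\|_{\calT_h}$ only becomes available after the duality step. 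The paper closes this loop by exploiting the gradient enrichment $\nabla\widetilde{P}_{k+2}(F)\subset\bfM^t(F)$: since $\bfP_{\bfM}$ reproduces tangential traces of $\nabla\eta$ for $\eta\in P_{k+2}(K)$, one may insert an arbitrary $\nabla\eta$ and invoke the scaled norm equivalence \eqref{norm_equiv}, $\min_{\eta}\|\bfe_u+\nabla\eta\|_K\le Ch\|\nabla\times\bfe_u\|_K$, to convert this piece into $Ch^{s}\|\bfw\|_{s,\Omega}\|\nabla\times\bfe_u\|_{\calT_h}$, with $\|\nabla\times\bfe_u\|_{\calT_h}$ then bounded separately through \eqref{error_1} by $\|\bfe_w\|_\Omega+h^{-1/2}\|\bfP_{\bfM}\bfe_u^t-\bfe_{\widehat{u}^t}\|_{\partial\calT_h}$. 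This device—the very reason the trace space is enriched beyond $\bfP_k(F)$—is needed again for the analogous term $T_{12}$ in the duality argument, and your proposal should state it explicitly; the remaining bookkeeping (the two $\tau_n$-weighted cross terms and the $h^{t-1}$ versus $h^{r}$ balance) is correct and matches the paper.
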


\begin{remark}
When the exact solution $(\bfu, \bfw, p)$ of (\ref{original}) is smooth enough, that is, $(\bfu, \bfw, p) \in [H^{k+2}(\Omega)]^3 \times [H^{k+1}(\Omega)]^3 \times [H^1_0(\Omega)\cap H^{k+1}(\Omega)]$, one can directly derive $\|\bfu-\bfu_h\|_{\Omega} \le C h^{k+1+\alpha}$. In particular, if $\alpha = 1$, we have $\|\bfu-\bfu_h\|_{\Omega} \le C h^{k+2}$. From the global degrees of freedom viewpoint, this error estimate reveals that both HDG methods achieve superconvergence for the electric field without postprocessing. In addition, the error estimates for $\boldsymbol{u}, \boldsymbol{w}$ in Theorem \ref{main_mod} are independent of the Lagrange multiplier $p$. The main ingredient for the proof is the use of the {\bf BDM} projection instead of $L^2$-projection for the variable $\boldsymbol{u}$.
See Section 4 for more details.
\end{remark}

\section{Proofs of the error estimates}\label{section4}
In this section we present the detail proof of Theorem \ref{main_mod}. Then we briefly sketch the proof of Theorem \ref{main_result} by particularly pointing out the different steps from the proof of Theorem \ref{main_mod}. 

\subsection{Error estimates for $\text{HDG}_s$ method on simplicial mesh} 
We begin by introducing the error equations that we need for the error estimates. We define the projection of the errors as below:
\begin{align*}
\bfe_w &:= \bfPi_W \bfw - \bfw_h, \quad \bfe_u := \bfPi^{k+1}_{\bf BDM}  \bfu - \bfu_h, \quad e_p := \Pi_Q p - p_h, \\
\bfe_{\widehat{u}^t} & := \bfP_{\bfM} \bfu^t - \widehat{\bfu}^t_h, \quad e_{\widehat{p}} := P_M p - \widehat{p}_h.
\end{align*}

Here all the projections $\bfPi_W, \Pi_Q, \bfP_M, P_M$ are the standard $L^2$-projections on the finite dimensional spaces $\bfW_h, Q_h, \bfM_h^t, M_h$ respectively. The projection $\bfPi^{k+1}_{\bf BDM} \boldsymbol{u}$ is the standard {\bf BDM} projection of $\boldsymbol{u}$ in $\bfP_{k+1}(K)$, for all $K \in \mathcal{T}_h$. In the analysis, we will use the following standard approximation properties of these projections:

\begin{subequations}\label{classic_ineq}
\begin{align}
\label{classic_1}
\|\bfu - \bfPi^{k+1}_{\bf BDM} \bfu \|_{\calT_h} & \le C h^{t-\delta} \|\bfu\|_{t, \Omega}, && 0 \le t \le k+2,\, \delta = 0,1,  \\
\label{classic_2}
\|\bfu - \bfPi^{k+1}_{\bf BDM} \bfu \|_{\partial \calT_h} & \le C h^{t- \frac12} \|\bfu\|_{t, \Omega},  && 1 \le t \le k+2, \\
\label{classic_3}
\|\bfw - \bfPi_W \bfw\|_{\Omega} &\le C h^s \|\bfw\|_{\Omega}, && 0 \le s \le k+1,\\
\label{classic_4}
\|\bfw - \bfPi_W \bfw\|_{\partial \calT_h} &\le C h^{s-\frac12} \|\bfw\|_{s, \Omega}, && 1 \le s \le k+1, \\
\label{classic_5}
\|p - \Pi_Q p\|_{\Omega} &\le C h^r \|p\|_{r, \Omega}, && 0 \le r \le k+1, \\
\label{classic_6}
\|p - \Pi_Q p\|_{\partial \calT_h} &\le C h^{r-\frac12} \|p\|_{r, \Omega}, && 1 \le r \le k+1, \\
\label{classic_7}
\|p - P_M p\|_{\partial \calT_h} &\le C h^{r-\frac12} \|p\|_{r, \Omega}, && 1 \le r \le k+2.
\end{align}
\end{subequations}

We are now ready to present the error equations that we need for the error estimates.
\begin{lemma}\label{error_eq}
Let $(\bfw,\bfu,p)$ and $(\bfw_h, \bfu_h, p_h, \widehat{\bfu}^t_h, \widehat{p}_h)$ solve the equations (\ref{original_pde_0})
and (\ref{HDG_form}), we have
\begin{subequations}\label{error_eqs}
\begin{align}
\label{error_1}
(\bfe_w, \bfr)_{\calT_h} - (\bfe_u , \nabla \times \bfr)_{\calT_h} + \langle \bfe_{\widehat{\bfu}^t} \times \bfn \, , \bfr \rangle_{\partial \calT_h} & = 0, \\
\label{error_2}
( \bfe_w, \nabla \times \bfv)_{\calT_h} - (e_p, \nabla \cdot \bfv)_{\calT_h} + \langle \bfw - \widehat{\bfw}_h, \bfv \times \bfn \rangle_{\partial \calT_h} + \langle e_{\widehat{p}}, \bfv \cdot \bfn \rangle_{\partial \calT_h} & = 0, \\
\label{error_3}
-(\bfe_u, \nabla q)_{\calT_h} + \langle \bfu \cdot \bfn - \widehat{\bfu}_h^n \cdot \bfn \, , q  \rangle_{\partial \calT_h} &  = 0,\\
\label{error_4}
\langle \bfw \times \bfn - \widehat{\bfw}_h \times \bfn \, , \bfeta \rangle_{\partial \calT_h} & = 0, \\
\label{error_5}
\langle \bfu \cdot \bfn - \widehat{\bfu}^n_h \cdot \bfn \, , \zeta \rangle_{\partial \calT_h} & = 0, 
\end{align}
\end{subequations}
for all $(\bfr, \bfv, q, \bfeta, \zeta) \in \bfW_h \times \bfV_h \times Q_h \times \bfM^{t,0}_h \times M^0_h$.
\end{lemma}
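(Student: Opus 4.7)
The plan is a standard HDG consistency argument: first establish that the exact solution $(\bfw,\bfu,p)$ satisfies identities analogous to \eqref{HDG_form_1}--\eqref{HDG_form_5} with its own boundary traces in place of the numerical fluxes, then replace the exact unknowns by their projections via appropriate orthogonalities, and finally subtract the HDG scheme to obtain \eqref{error_1}--\eqref{error_5}.

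First I would test \eqref{original_1}, \eqref{original_2}, and \eqref{original_3} against $\bfr\in\bfW_h$, $\bfv\in\bfV_h$, and $q\in Q_h$ respectively and integrate by parts elementwise. Using the identity $\int_K(\nabla\times\bfa)\cdot\bfb\,dx=\int_K\bfa\cdot(\nabla\times\bfb)\,dx+\int_{\partial K}(\bfn\times\bfa)\cdot\bfb\,dS$ together with the standard divergence integration by parts, this yields continuous versions of \eqref{HDG_form_1}--\eqref{HDG_form_3} in which $\widehat{\bfu}^t_h$, $\widehat{\bfw}_h$, $\widehat{p}_h$, and $\widehat{\bfu}^n_h\cdot\bfn$ are replaced by the corresponding traces of the exact solution on $\partial\calT_h$, and $\bfw_h,\bfu_h,p_h$ by $\bfw,\bfu,p$.

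Next I would apply projection orthogonalities to rewrite the appropriate terms using $\bfPi_W\bfw$, $\bfPi^{k+1}_{\bf BDM}\bfu$, $\Pi_Q p$, $\bfP_{\bfM}\bfu^t$, and $P_M p$. The $L^2$-projection definitions immediately give $(\bfw,\bfr)_{\calT_h}=(\bfPi_W\bfw,\bfr)_{\calT_h}$ and $(p,\nabla\cdot\bfv)_{\calT_h}=(\Pi_Q p,\nabla\cdot\bfv)_{\calT_h}$ (the latter because $\nabla\cdot\bfv\in Q_h$), while the interior moment properties of the three-dimensional BDM projection of order $k+1$, which preserves moments against $\bfP_{k-1}(K)$ and $\nabla P_k(K)$, yield $(\bfu,\nabla\times\bfr)_{\calT_h}=(\bfPi^{k+1}_{\bf BDM}\bfu,\nabla\times\bfr)_{\calT_h}$ and $(\bfu,\nabla q)_{\calT_h}=(\bfPi^{k+1}_{\bf BDM}\bfu,\nabla q)_{\calT_h}$. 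For the boundary terms, on each face $F$ the vector $\bfn\times\bfr|_F$ has tangential components in $\bfP_k(F)^t\subset\bfM^t(F)$, so $\langle\bfu\times\bfn,\bfr\rangle_{\partial\calT_h}=\langle\bfP_{\bfM}\bfu^t\times\bfn,\bfr\rangle_{\partial\calT_h}$, and $\bfv\cdot\bfn|_F\in P_{k+1}(F)$ gives $\langle p,\bfv\cdot\bfn\rangle_{\partial\calT_h}=\langle P_M p,\bfv\cdot\bfn\rangle_{\partial\calT_h}$. I deliberately do \emph{not} project $\bfw$ inside $\langle\bfw,\bfv\times\bfn\rangle_{\partial\calT_h}$ because $\bfv\times\bfn|_F$ need not lie in $\bfM^t_h$; this is exactly why \eqref{error_2} carries $\bfw-\widehat{\bfw}_h$ rather than $\bfPi_W\bfw-\widehat{\bfw}_h$. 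Subtracting \eqref{HDG_form_1}--\eqref{HDG_form_3} from the resulting projected identities then produces \eqref{error_1}--\eqref{error_3}.

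Finally, \eqref{error_4} and \eqref{error_5} follow from the regularity of the exact solution. Since $\bfw\in\bfH(\mathrm{curl},\Omega)$, the tangential trace $\bfw\times\bfn$ is single-valued across interior faces, so contributions to $\langle\bfw\times\bfn,\bfeta\rangle_{\partial\calT_h}$ from adjacent elements cancel via opposite outward normals, and the boundary contribution vanishes because $\bfeta\in\bfM^{t,0}_h$ satisfies $\bfeta|_{\partial\Omega}=0$; subtracting \eqref{HDG_form_4} yields \eqref{error_4}. An analogous argument for the single-valued normal trace $\bfu\cdot\bfn$ tested against $\zeta\in M_h^0$ and combined with \eqref{HDG_form_5} delivers \eqref{error_5}. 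The only real technical point is verifying the two interior moment orthogonalities of the three-dimensional BDM projection used above; these are precisely what allow $\bfe_u$, defined via BDM rather than $L^2$-projection, to appear cleanly in \eqref{error_1} and \eqref{error_3}, and ultimately (in Theorem \ref{main_mod}) to decouple the $\bfu$- and $\bfw$-error estimates from the Lagrange multiplier $p$.
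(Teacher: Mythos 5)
Your proposal is correct and follows essentially the same route as the paper: establish consistency of the exact solution with the HDG system, replace $\bfw,\bfu,p,\bfu^t,p|_{\calE_h}$ by $\bfPi_W\bfw,\bfPi^{k+1}_{\bf BDM}\bfu,\Pi_Q p,\bfP_{\bfM}\bfu^t,P_M p$ using the projection orthogonalities and the inclusions $\nabla\times\bfV_h\subset\bfW_h$, $\nabla\cdot\bfV_h\subset Q_h$, $\bfr\times\bfn|_F\in\bfM^t(F)$, $\bfv\cdot\bfn|_F\in P_{k+1}(F)$, and subtract \eqref{HDG_form}. Your additional observations — that $\bfw$ is deliberately left unprojected in the $\langle\bfw,\bfv\times\bfn\rangle$ term and that \eqref{error_4}–\eqref{error_5} reduce to single-valuedness of the exact traces — are exactly the points the paper's terser proof relies on implicitly.
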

\begin{proof}
It is obvious that the exact solution $(\bfw, \bfu, p, \bfu^t|_{\calE_h}, p|_{\calE_h})$ also satisfies the system \eqref{HDG_form}:
\begin{align*}
(\bfw, \bfr)_{\calT_h} - (\bfu , \nabla \times \bfr)_{\calT_h} + \langle {\bfu}^t \times \bfn \, , \bfr \rangle_{\partial \calT_h} & = 0, \\
( \bfw, \nabla \times \bfv)_{\calT_h} - (p, \nabla \cdot \bfv)_{\calT_h} + \langle {\bfw}, \bfv \times \bfn \rangle_{\partial \calT_h} + \langle {p}, \bfv \cdot \bfn \rangle_{\partial \calT_h} & = (\bff, \bfv)_{\calT_h}, \\
-(\bfu, \nabla q)_{\calT_h} + \langle {\bfu}^n \cdot \bfn \, , q  \rangle_{\partial \calT_h} &  = 0,\\
\langle {\bfw} \times \bfn \, , \bfeta \rangle_{\partial \calT_h} & = 0, \\
\langle {\bfu}^n \cdot \bfn \, , \zeta \rangle_{\partial \calT_h} & = 0,
\end{align*}
 for all $(\bfr, \bfv, q, \bfeta, \zeta) \in \bfW_h \times \bfV_h \times Q_h \times \bfM^{t,0}_h \times M^0_h$. By the orthogonality of the projections as well as the inclusion properties between the finite dimensional spaces, we can write the above system as:
 \begin{align*}
(\bfPi_W \bfw, \bfr)_{\calT_h} - (\bfPi^{k+1}_{\bf BDM} \bfu , \nabla \times \bfr)_{\calT_h} + \langle \bfP_{\bfM} {\bfu}^t \times \bfn \, , \bfr \rangle_{\partial \calT_h} & = 0, \\
( \bfPi_W \bfw, \nabla \times \bfv)_{\calT_h} - (\Pi_Q p, \nabla \cdot \bfv)_{\calT_h} + \langle {\bfw}, \bfv \times \bfn \rangle_{\partial \calT_h} + \langle P_M {p}, \bfv \cdot \bfn \rangle_{\partial \calT_h} & = (\bff, \bfv)_{\calT_h}, \\
-(\bfPi^{k+1}_{\bf BDM} \bfu, \nabla q)_{\calT_h} + \langle {\bfu}^n \cdot \bfn \, , q  \rangle_{\partial \calT_h} &  = 0,\\
\langle {\bfw} \times \bfn \, , \bfeta \rangle_{\partial \calT_h} & = 0, \\
\langle {\bfu}^n \cdot \bfn \, , \zeta \rangle_{\partial \calT_h} & = 0,
\end{align*}
 for all $(\bfr, \bfv, q, \bfeta, \zeta) \in \bfW_h \times \bfV_h \times Q_h \times \bfM^{t,0}_h \times M^0_h$. Finally we obtain the error equations by subtracting \eqref{HDG_form} from the above equations. 
\end{proof}

We first present the estimate for $\bfe_w$ which can be achieved by several steps. First we carry out an important identity by a standard energy argument.

\begin{lemma}\label{lemma_energy_id}
We have
\begin{align*}
&\quad  \|\bfe_w\|^2_{\Omega} + \tau_t \|\bfP_{\bfM} \bfe_u^t - \bfe_{\widehat{u}^t}\|^2_{\partial \calT_h} \\
&=  - \langle \bfw - \bfPi_W {\bfw} \, , \, (\bfe_u^t - \bfe_{\widehat{u}^t}) \times \bfn \rangle_{\partial \calT_h}   - \langle \tau_t \bfP_{\bfM}(\bfu^t - (\bfPi^{k+1}_{\bf BDM} \bfu)^t) \times \bfn \, , \, (\bfe_u^t - \bfe_{\widehat{u}^t}) \times \bfn \rangle_{\partial \calT_h} \\
&:= - T_1 - T_2.
\end{align*}
\end{lemma}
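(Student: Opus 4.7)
My plan is to derive the identity by a standard HDG energy argument: test the five error equations of Lemma~\ref{error_eq} with the natural choice $(\bfr,\bfv,q,\bfeta,\zeta)=(\bfe_w,\bfe_u,e_p,\bfe_{\widehat{u}^t},e_{\widehat{p}})$, sum them, and simplify using integration by parts and the definition of the numerical flux \eqref{HDG_trace_w}.

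First I would combine \eqref{error_1} with $\bfr=\bfe_w$ and \eqref{error_2} with $\bfv=\bfe_u$, applying the elementwise identity $(\nabla\times\bfa,\bfb)_K-(\bfa,\nabla\times\bfb)_K=\langle\bfa,\bfb\times\bfn\rangle_{\partial K}$ to combine the two curl pairings. After reorganizing the tangential boundary contributions via the scalar triple product, this yields
\begin{equation*}
(\bfe_w,\bfe_w)_{\calT_h}-\langle\bfe_w,(\bfe_u^t-\bfe_{\widehat{u}^t})\times\bfn\rangle_{\partial\calT_h}-(e_p,\nabla\cdot\bfe_u)_{\calT_h}+\langle\bfw-\widehat{\bfw}_h,\bfe_u\times\bfn\rangle_{\partial\calT_h}+\langle e_{\widehat{p}},\bfe_u\cdot\bfn\rangle_{\partial\calT_h}=0.
\end{equation*}
Next I would dispose of the multiplier terms one at a time. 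Testing \eqref{error_3} with $q=e_p$ and using that $\bfPi^{k+1}_{\bf BDM}\bfu\cdot\bfn$ equals the $L^2$-projection of $\bfu\cdot\bfn$ onto $P_{k+1}$ on every face, together with the flux rule $\widehat{\bfu}^n_h\cdot\bfn=\bfu_h\cdot\bfn$, one gets $\langle\bfu\cdot\bfn-\widehat{\bfu}^n_h\cdot\bfn,e_p\rangle_{\partial\calT_h}=\langle\bfe_u\cdot\bfn,e_p\rangle_{\partial\calT_h}$; an integration by parts then forces $(e_p,\nabla\cdot\bfe_u)_{\calT_h}=0$. The same BDM property, combined with $\bfu_h\in\bfH(\text{div},\Omega)$ (Lemma~\ref{well-posedness}), implies that $\bfe_u\cdot\bfn$ is single-valued across interior faces, so a telescoping argument plus $e_{\widehat{p}}|_{\partial\Omega}=0$ kills the term $\langle e_{\widehat{p}},\bfe_u\cdot\bfn\rangle_{\partial\calT_h}$. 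Finally, \eqref{error_4} tested with $\bfeta=\bfe_{\widehat{u}^t}\in\bfM_h^{t,0}$ gives $\langle\bfw-\widehat{\bfw}_h,\bfe_{\widehat{u}^t}\times\bfn\rangle_{\partial\calT_h}=0$, which lets me replace $\bfe_u\times\bfn$ by $(\bfe_u^t-\bfe_{\widehat{u}^t})\times\bfn$ in the remaining boundary integral.

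At this stage the identity will have collapsed to
\begin{equation*}
(\bfe_w,\bfe_w)_{\calT_h}+\langle(\bfw-\widehat{\bfw}_h)-\bfe_w,(\bfe_u^t-\bfe_{\widehat{u}^t})\times\bfn\rangle_{\partial\calT_h}=0.
\end{equation*}
The final step is to expand $\bfw-\widehat{\bfw}_h$ through \eqref{HDG_trace_w}. Writing $\bfw-\bfw_h=(\bfw-\bfPi_W\bfw)+\bfe_w$ and decomposing $\bfP_{\bfM}\bfu^t_h-\widehat{\bfu}^t_h=-\bfP_{\bfM}(\bfu-\bfPi^{k+1}_{\bf BDM}\bfu)^t-(\bfP_{\bfM}\bfe_u^t-\bfe_{\widehat{u}^t})$ produces
\begin{equation*}
\bfw-\widehat{\bfw}_h=(\bfw-\bfPi_W\bfw)+\bfe_w+\tau_t\bfP_{\bfM}(\bfu-\bfPi^{k+1}_{\bf BDM}\bfu)^t\times\bfn+\tau_t(\bfP_{\bfM}\bfe_u^t-\bfe_{\widehat{u}^t})\times\bfn.
\end{equation*}
Plugging this back in, the $\bfe_w$ piece cancels the preceding boundary term, the two projection-error pieces generate exactly $-T_1$ and $-T_2$ on the right, and the remaining $\tau_t$-piece becomes $\tau_t\|\bfP_{\bfM}\bfe_u^t-\bfe_{\widehat{u}^t}\|^2_{\partial\calT_h}$ via the identity $(\bfa\times\bfn)\cdot(\bfb\times\bfn)=\bfa^t\cdot\bfb^t$ together with the self-adjointness of $\bfP_{\bfM}$ and $\bfP_{\bfM}\bfe_{\widehat{u}^t}=\bfe_{\widehat{u}^t}$, supplying the stabilization term on the left.

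The main obstacle is really bookkeeping: keeping track of signs in scalar triple products and of the various tangential/normal components under the $\bfP_{\bfM}$ projection. The key conceptual point is that the degree-$k{+}1$ BDM projection for $\bfu$ is precisely what makes the two $p$-dependent terms $(e_p,\nabla\cdot\bfe_u)_{\calT_h}$ and $\langle e_{\widehat{p}},\bfe_u\cdot\bfn\rangle_{\partial\calT_h}$ vanish identically; without this property the right-hand side would inherit dependence on $p$, and the announced decoupling of the $(\bfu,\bfw)$ estimates from the Lagrange multiplier would be lost.
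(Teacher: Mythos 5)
Your proof is correct and follows essentially the same route as the paper: test the error equations with $(\bfe_w,\bfe_u,e_p,\bfe_{\widehat{u}^t},e_{\widehat{p}})$, sum, use the face-moment orthogonality of $\bfPi^{k+1}_{\bf BDM}$ to eliminate the Lagrange-multiplier terms, and expand $\bfw-\widehat{\bfw}_h$ via \eqref{HDG_trace_w} together with the orthogonality of $\bfP_{\bfM}$ to produce $T_1$, $T_2$ and the stabilization term. The only (immaterial) difference is bookkeeping: you dispose of the $e_p$ and $e_{\widehat{p}}$ contributions separately (via $(\nabla\cdot\bfe_u,e_p)_{\calT_h}=0$ and the single-valuedness of $\bfe_u\cdot\bfn$), whereas the paper keeps them combined as $\langle(\bfu-\bfPi^{k+1}_{\bf BDM}\bfu)\cdot\bfn,\,e_p-e_{\widehat{p}}\rangle_{\partial\calT_h}$ and kills the pair with one application of the same BDM orthogonality.
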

\begin{proof}
Taking $(\bfr, \bfv, q, \bfeta, \zeta) = (\bfe_w, \bfe_u, e_p, \bfe_{\widehat{u}^t}, e_{\widehat{p}})$ in the error equations \eqref{error_1}-\eqref{error_5} and adding, after some algebraic manipulations, we obtain:
\begin{equation}\label{energy_1}
(\bfe_w, \bfe_w)_{\calT_h} + \langle \bfw - \widehat{\bfw}_h -  \bfe_w \, , \, \bfe_u^t \times \bfn - \bfe_{\widehat{u}^t} \times \bfn \rangle_{\partial \calT_h} + \langle \bfu \cdot \bfn - \widehat{\bfu}^n_h \cdot \bfn - \bfe_u \cdot \bfn \, , \, e_p - e_{\widehat{p}} \rangle_{\partial \calT_h} = 0.
\end{equation}
Notice that by the definition of the numerical traces \eqref{HDG_trace_w} and \eqref{HDG_trace_u} we can write
\begin{align*}
\bfw - \widehat{\bfw}_h - \bfe_w & = \bfw - \widehat{\bfw}_h - ( \bfPi_W \bfw - \bfw_h) = \bfw -\bfPi_W \bfw  + (\bfw_h - \widehat{\bfw}_h) \\
&=  \bfw -\bfPi_W \bfw - \tau_t (\bfP_{\bfM} \bfu^t_h - \widehat{\bfu}^t_h) \times \bfn \\
&=  \bfw -\bfPi_W \bfw + \tau_t (\bfP_{\bfM} \bfe_u^t - \bfe_{\widehat{\bfu}^t}) \times \bfn + \tau_t \bfP_{\bfM}(\bfu^t - (\bfPi^{k+1}_{\bf BDM} \bfu)^t) \times \bfn,\\
 \bfu \cdot \bfn - \widehat{\bfu}^n_h \cdot \bfn - \bfe_u \cdot \bfn & = \bfu \cdot \bfn - \widehat{\bfu}^n_h \cdot \bfn - (\bfPi^{k+1}_{\bf BDM} \bfu - \bfu_h) \cdot \bfn = (\bfu - \bfPi^{k+1}_{\bf BDM} \bfu) \cdot \bfn.
\end{align*} 
Inserting the above two expressions into \eqref{energy_1} together with the orthogonal property of the $L^2$-projection $\bfP_{\bfM}$ and $\bfPi^{k+1}_{\bf BDM}$ on the interfaces we complete the proof.
\end{proof}

Based on the identity in the above lemma, we are ready to estimate $\bfe_w$. 

\begin{lemma}\label{estimate_ew}
If the regularity of the exact solution $(\bfu, \bfw, p)$ of (\ref{original}) holds as assumed in Theorem \ref{main_mod}, the stabilization parameters $\tau_t = \mathcal{O}(\frac{1}{h})$ on each $F \in \partial \calT_h$, we have
\begin{align*}
\|\nabla \cdot (\bfu - \bfu_h)\|_{\calT_h} & = 0, \\
\quad \|\bfe_w\|_{\Omega} + \tau^{\frac12}_t \|\bfP_{\bfM} \bfe_u^t - \bfe_{\widehat{u}^t}\|_{\partial \calT_h} + \|\nabla \times \bfe_u\|_{\calT_h} 
&\le C \Big( h^{t-1} \|\bfu\|_{t, \Omega} + h^s \|\bfw\|_{s, \Omega} \Big),
\end{align*}
for $1 \le s \le k+1, \; 1 \le r \le k+1,\; 1 \le t \le k+2$. 
\end{lemma}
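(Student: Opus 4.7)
The first identity $\|\nabla\cdot(\bfu-\bfu_h)\|_{\calT_h}=0$ is immediate: by \eqref{original_3} the exact solution satisfies $\nabla\cdot\bfu=0$ in $\Omega$, while \eqref{divfree_u} in the proof of Lemma \ref{well-posedness} gives $\nabla\cdot\bfu_h=0$ pointwise on each element. For the remainder, the plan is to begin with the energy identity from Lemma \ref{lemma_energy_id} and bound each right--hand side term in the form (data)\,$\cdot\,\|\bfe_w\|_{\Omega}$ or (data)\,$\cdot\,\tau_t^{1/2}\|\bfP_{\bfM}\bfe_u^t-\bfe_{\widehat u^t}\|_{\partial\calT_h}$, after which Young's inequality closes the estimate.

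For $T_2$, the key observation is that $\tau_t\bfP_{\bfM}(\bfu^t-(\bfPi^{k+1}_{\bf BDM}\bfu)^t)$ lies in $\bfM^t_h$, so its pairing with the $\bfM^t_h$--orthogonal component $\bfe_u^t-\bfP_{\bfM}\bfe_u^t$ vanishes; only the contribution involving $\bfP_{\bfM}\bfe_u^t-\bfe_{\widehat u^t}$ survives, and Cauchy--Schwarz together with \eqref{classic_2} and the scaling $\tau_t=\mathcal{O}(1/h)$ produce the bound $|T_2|\le C\,h^{t-1}\|\bfu\|_{t,\Omega}\cdot \tau_t^{1/2}\|\bfP_{\bfM}\bfe_u^t-\bfe_{\widehat u^t}\|_{\partial\calT_h}$. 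For $T_1$ we split $\bfe_u^t-\bfe_{\widehat u^t}=(\bfe_u^t-\bfP_{\bfM}\bfe_u^t)+(\bfP_{\bfM}\bfe_u^t-\bfe_{\widehat u^t})$. The pairing of $\bfw-\bfPi_W\bfw$ with the second piece is handled directly by Cauchy--Schwarz and \eqref{classic_4}, giving (with $\tau_t=\mathcal{O}(1/h)$) a factor $C\,h^{s}\|\bfw\|_{s,\Omega}$ times the stabilization term. The pairing with the first piece is the main obstacle; to treat it, I would perform an elementwise integration by parts and exploit both the $L^2$--orthogonality of $\bfPi_W$ (so that $(\nabla\times\bfe_u,\bfw-\bfPi_W\bfw)_K=0$ because $\nabla\times\bfe_u\in\bfP_k(K)=\bfW_h|_K$) and the defining structure of $\bfM^t(F)=[\bfP_k(F)+\nabla\widetilde{P}_{k+2}(F)]^t$ used in Lemma \ref{well-posedness}: tangential traces of curl--free $\bfP_{k+1}(K)$--functions already lie in $\bfM^t(F)$, so informally the factor $\bfe_u^t-\bfP_{\bfM}\bfe_u^t$ only sees the non--curl--free part of $\bfe_u$. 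This transfers the $h$--loss onto $\nabla\times(\bfw-\bfPi_W\bfw)$ (approximation \eqref{classic_3} applied to the curl) paired against $\|\nabla\times\bfe_u\|_{\calT_h}$, which will ultimately be controlled by Young's inequality in conjunction with Step 3.

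For $\|\nabla\times\bfe_u\|_{\calT_h}$, I would take $\bfr=\nabla\times\bfe_u\in\bfP_k(K)\subset\bfW_h$ in the error equation \eqref{error_1} after integration by parts, obtaining
\[
\|\nabla\times\bfe_u\|_{\calT_h}^2=(\bfe_w,\nabla\times\bfe_u)_{\calT_h}-\langle(\bfe_u-\bfe_{\widehat u^t})\times\bfn,\nabla\times\bfe_u\rangle_{\partial\calT_h}.
\]
Cauchy--Schwarz on the first term, together with the inverse/trace inequality $\|\nabla\times\bfe_u\|_{\partial K}\le Ch^{-1/2}\|\nabla\times\bfe_u\|_K$ on the boundary term and the splitting of $\bfe_u-\bfe_{\widehat u^t}$ already used for $T_1$, reduces everything to $\|\bfe_w\|_\Omega$, $\tau_t^{1/2}\|\bfP_{\bfM}\bfe_u^t-\bfe_{\widehat u^t}\|_{\partial\calT_h}$, and the approximation terms. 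A Young--inequality absorption into the energy identity from Lemma \ref{lemma_energy_id}, performed simultaneously for the three left--hand quantities, then yields the stated bound with right--hand side $C(h^{t-1}\|\bfu\|_{t,\Omega}+h^{s}\|\bfw\|_{s,\Omega})$.

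The hardest step is clearly the first piece of $T_1$: naive Cauchy--Schwarz would introduce $\|\bfe_u^t-\bfP_{\bfM}\bfe_u^t\|_{\partial\calT_h}$, which via an inverse estimate would cost a full power of $h^{-1/2}$ paired with $\|\bfe_u\|_{\calT_h}$---a quantity that is not available at this point and is circular with the $L^2$ duality estimate of Theorem \ref{main_mod}. The resolution hinges on exploiting the Lehrenfeld--Sch\"oberl--type enhanced space $\bfM^t(F)$: combined with the elementwise integration by parts and the $L^2$--orthogonality of $\bfP_{\bfM}$, the only surviving contribution can be expressed through $\nabla\times(\bfw-\bfPi_W\bfw)$ and $\nabla\times\bfe_u$, which is exactly what matches the required convergence rate and avoids the circularity.
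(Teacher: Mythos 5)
Your proposal follows the paper's proof in all essentials: the energy identity of Lemma \ref{lemma_energy_id}, the splitting of $\bfe_u^t-\bfe_{\widehat{u}^t}$ into its $\bfM^t_h$--component and its $\bfM^t_h$--orthogonal remainder, the observation that the enhanced space $\bfM^t(F)\supset[\nabla P_{k+2}(F)]^t$ makes that remainder blind to local gradients, the separate bound for $\|\nabla\times\bfe_u\|_{\calT_h}$ obtained from \eqref{error_1} with $\bfr=\nabla\times\bfe_u$ (where the first piece of the splitting again drops out because $\bfn\times(\nabla\times\bfe_u)|_F\in\bfM^t(F)$), and the final Young absorption with $\tau_t=\mathcal{O}(1/h)$. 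The only place you deviate is the hard term $T_{12}=\langle\bfw-\bfPi_W\bfw,(\bfe_u^t-\bfP_{\bfM}\bfe_u^t)\times\bfn\rangle_{\partial\calT_h}$: the paper stays entirely on the faces, writes $\bfe_u^t-\bfP_{\bfM}\bfe_u^t=(\bfe_u+\nabla\eta)^t-\bfP_{\bfM}(\bfe_u+\nabla\eta)^t$ for arbitrary $\eta\in P_{k+2}(\calT_h)$, applies Cauchy--Schwarz with a trace/inverse inequality, and invokes the scaling estimate \eqref{norm_equiv}, $\min_{\eta}\|\bfv+\nabla\eta\|_K\le Ch\|\nabla\times\bfv\|_K$, which is the precise quantitative form of your ``only sees the non--curl--free part'' remark. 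Your alternative via elementwise integration by parts can be made to work, but as written it is imprecise on one point: \eqref{classic_3} does not apply to $\nabla\times(\bfw-\bfPi_W\bfw)$, because the $L^2$ projection $\bfPi_W$ does not commute with the curl; you would need an auxiliary projection plus an inverse inequality to obtain $\|\nabla\times(\bfw-\bfPi_W\bfw)\|_K\le Ch^{s-1}\|\bfw\|_{s,K}$, and you must pair the resulting volume term with $\bfe_u+\nabla\eta$ rather than with $\bfe_u$ itself so as to retain the factor $h\|\nabla\times\bfe_u\|_K$ and avoid exactly the circular appearance of $\|\bfe_u\|_{\calT_h}$ that you flag. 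With that repaired, both routes yield $|T_{12}|\le Ch^{s}\|\bfw\|_{s,\Omega}\|\nabla\times\bfe_u\|_{\calT_h}$, and the remainder of your argument coincides with the paper's.
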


\begin{proof}
The first equality is trivial since both $\bfu, \bfu_h$ are divergence free over $\Omega$. For the second inequality, 
we begin by estimating $T_1, T_2$ on the right hand side of the identity in Lemma \ref{lemma_energy_id}. We start with $T_2$, 
\begin{align*}
T_2 &= \langle \tau_t \bfP_{\bfM}(\bfu^t - (\bfPi^{k+1}_{\bf BDM} \bfu)^t)  \, , \, (\bfe_u^t - \bfe_{\widehat{u}^t}) \rangle_{\partial \calT_h} = \langle \tau_t \bfP_{\bfM}(\bfu^t - (\bfPi^{k+1}_{\bf BDM} \bfu)^t)  \, , \, (\bfP_{\bfM} \bfe_u^t - \bfe_{\widehat{u}^t}) \rangle_{\partial \calT_h} \\
& = \langle \tau_t (\bfu- \bfPi^{k+1}_{\bf BDM} \bfu)^t  \, , \, (\bfP_{\bfM} \bfe_u^t - \bfe_{\widehat{u}^t}) \rangle_{\partial \calT_h}
 \le  \tau_t \|\bfu- \bfPi^{k+1}_{\bf BDM} \bfu\|_{\partial \calT_h} \|\bfP_{\bfM} \bfe_u^t - \bfe_{\widehat{u}^t}\|_{\partial \calT_h}  \\
 & \le C \tau_t^{\frac12} h^{t-\frac12} \|\bfu\|_{t, \Omega} \, \tau_t^{\frac12}\|\bfP_{\bfM} \bfe_u^t - \bfe_{\widehat{u}^t}\|_{\partial \calT_h},
\end{align*}
for $1 \le t \le k+2$. The last step we applied the approximation property of the projection $\bfPi^{k+1}_{\bf BDM}$ \eqref{classic_2}.

To bound $T_1$, we first rewrite the term as:
\[
T_1 = \langle \bfw - \bfPi_W {\bfw} \, , \, ( \bfP_{\bfM}  \bfe_u^t - \bfe_{\widehat{u}^t}) \times \bfn \rangle_{\partial \calT_h} + \langle \bfw - \bfPi_W {\bfw} \, , \, ( \bfe_u^t - \bfP_{\bfM}  \bfe_u^t) \times \bfn \rangle_{\partial \calT_h} := T_{11} + T_{12}.
\]

For $T_{11}$ we directly apply Cauchy-Schwarz inequality,
\[
T_{11} \le \tau_t^{\frac12} \|\bfP_{\bfM}  \bfe_u^t - \bfe_{\widehat{u}^t}\|_{\partial \calT_h} \; \tau_t^{-\frac12}  \| \bfw - \bfPi_W {\bfw} \|_{\partial \calT_h} \le C \tau_t^{-\frac12} h^{s-\frac12} \|\bfw\|_{s, \Omega} \; \tau_t^{\frac12} \|\bfP_{\bfM}  \bfe_u^t - \bfe_{\widehat{u}^t}\|_{\partial \calT_h},
\]
the last inequality is due to \eqref{classic_4}.

For $T_{12}$ we have
\begin{align*}
T_{12} &= - \langle (\bfw - \bfPi_W {\bfw}) \times \bfn \, , \, ( \bfe_u^t - \bfP_{\bfM}  \bfe_u^t)  \rangle_{\partial \calT_h} \\
& =  - \langle \bfw \times \bfn - \bfP_{\bfM} ({\bfw} \times \bfn) \, , \, ( \bfe_u^t - \bfP_{\bfM}  \bfe_u^t)  \rangle_{\partial \calT_h} \qquad  \text{(due to $(\bfPi_W \bfw \times \bfn)|_{F} \in \bfM^t(F)$)} \\
& =  - \langle \bfw \times \bfn - \bfP_{\bfM} ({\bfw} \times \bfn) \, , \,  \bfe_u^t  \rangle_{\partial \calT_h} \\
& =  - \langle \bfw \times \bfn - \bfP_{\bfM} ({\bfw} \times \bfn) \, , \,  \bfe_u^t + (\nabla \eta)^t  \rangle_{\partial \calT_h},\\
\intertext{for any $\eta \in P_{k+2}(\calT_h):=\oplus_{K \in \calT_h} P_{k+2}(K)$. Due to the orthogonal property of $\bfP_{\bfM}$ and the definition of the space $\bfM_h^t$, applying Cauchy-Schwarz inequality and inverse inequality we have,}
T_{12} & \le \| \bfw \times \bfn - \bfP_{\bfM} ({\bfw} \times \bfn)\|_{\partial \calT_h}  \|\bfe_u^t + (\nabla \eta)^t\|_{\partial \calT_h} \le \| \bfw \times \bfn - \bfP_{\bfM} ({\bfw} \times \bfn) \|_{\partial \calT_h}  \|\bfe_u^t + (\nabla \eta)^t\|_{\partial \calT_h} \\
           & \le C \| \bfw - \bfP_{\bfM} {\bfw}\|_{\partial \calT_h} \; h^{-\frac12} \|\bfe_u + \nabla \eta\|_{\calT_h},
\end{align*}
for any $\eta \in P_{k+2}(\calT_h)$. We notice that on each $K \in \calT_h$, the $\nabla \times$ operator is an injective mapping from $\bfP_{k+1}(K)\setminus \nabla (P_{k+2}(K))$ onto $\bfP_{k}(K)$, 
therefore, the two norms:
\[
\min_{\eta \in P_{k+2}(K)} \|\bfv + \nabla \eta\|_{K}, \quad \|\nabla \times \bfv\|_{K}
\]
are equivalent in the finite dimensional space $\bfP_{k+1}(K)$. A simple scaling argument implies that 
\begin{equation}\label{norm_equiv}
\min_{\eta \in P_{k+2}(K)} \|\bfv + \nabla \eta\|_{K} \le C h  \|\nabla \times \bfv\|_{K}.
\end{equation}
Owing to the above inequality and \eqref{classic_4} we have
\begin{align*}
T_{12} &\le C \| \bfw - \bfPi_{W} {\bfw}\|_{\partial \calT_h} \; h^{-\frac12} \min_{\eta \in P_{k+2}(\calT_h)} \|\bfe_u + \nabla \eta\|_{\calT_h} \\
& \le C h^s \|\bfw\|_{s, \Omega} \|\nabla \times \bfe_u\|_{\calT_h},
\end{align*}
for $1 \le s \le k+1$. 

To conclude the estimate for $\|\bfe_{w}\|_{\Omega}$, we need to bound $\|\nabla \times \bfe_{u}\|_{\calT_h}$. To this end, taking $\bfr = \nabla \times \bfe_{u}$ in the error equation \eqref{error_1} and integrating by parts, we have
\[
(\bfe_w, \nabla \times \bfe_u)_{\calT_h} - \|\nabla \times \bfe_u\|_{\calT_h}^2 - \langle (\bfe_u - \bfe_{\widehat{u}^t}) \times \bfn \, , \, \nabla \times \bfe_u \rangle_{\partial \calT_h} = 0. 
\]
By the fact that $\bfn\times(  \nabla \times \bfe_u)|_{F} \in \bfM^t(F)$ for all $F \in \partial \calT_h$, we can rewrite the above identity as
\begin{align*}
\|\nabla \times \bfe_u\|_{\calT_h}^2 &= (\bfe_w, \nabla \times \bfe_u)_{\calT_h} - \langle (\bfP_{\bfM} \bfe^t_u - \bfe_{\widehat{u}^t}) \times \bfn \, , \, \nabla \times \bfe_u \rangle_{\partial \calT_h} \\
&\le C \|\nabla \times \bfe_u\|_{\calT_h} ( \|\bfe_w\|_{\Omega} + h^{-\frac12} \|\bfP_{\bfM} \bfe^t_u - \bfe_{\widehat{u}^t}\|_{\partial \calT_h}).
\end{align*}
If we combine the estimates for $T_1, T_2$ and $\|\nabla \times \bfe_u\|_{\calT_h}$ and apply the Young's inequality, we have
\begin{align*}
\|\bfe_w\|_{\calT_h} &+ \tau^{\frac12}_t \|\bfP_{\bfM} \bfe_u^t - \bfe_{\widehat{u}^t}\|_{\partial \calT_h}
\le  C \Big( \tau_t^{\frac12}  h^{t-\frac12} \|\bfu\|_{t, \Omega} +(1 + \tau_t^{-\frac12} h^{-\frac12}) h^s \|\bfw\|_{s, \Omega} \Big),
\end{align*}
for $1 \le s \le k+1, \; 1 \le r \le k+1,\;1 \le t \le k+2$. We can see that if we take $\tau_t = \mathcal{O}(\frac{1}{h})$, we obtain the estimate stated in the lemma.
\end{proof}

Next we apply a duality argument to show the superconvergence of $\|\bfe_u\|_{\Omega}$. We first present the following identity that we are going to use in the analysis.

\begin{lemma}\label{dual_identity}
Let $(\bfphi, \bftheta, \sigma)$ be the solution of the adjoint problem \eqref{adj_eq}, then we have
\begin{align*}
\|\bfe_u\|^2_{\Omega}  = & \langle (\bfe_u^t - \bfe_{\widehat{u}^t})\times \bfn \, , \, \bftheta - \bfPi_W \bftheta \rangle_{\partial \calT_h} - \langle e_p -  e_{\widehat{p}} \, , \, (\bfphi - \bfPi^{k+1}_{\bf RT} \bfphi) \cdot \bfn \rangle_{\partial \calT_h} \\
&+\langle \bfw - \widehat{\bfw}_h -\bfe_w \, , \, (\bfphi - \bfPi^{k+1}_{\bf RT} \bfphi)\times \bfn \rangle_{\partial \calT_h}  + \langle \bfu \cdot \bfn - {\widehat{\bfu}_h} \cdot \bfn -\bfe_u \cdot \bfn \, , \, \sigma - \Pi_Q \sigma \rangle_{\partial \calT_h} \\
:= &  T_1 - T_2 + T_3 + T_4.
\end{align*}
Here $\bfPi^{k+1}_{\bf RT}$ denotes the $\bf{RT}$ projection onto the local space $\bf{RT}_{k+1}(K)$.
\end{lemma}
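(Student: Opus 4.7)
The plan is a standard duality argument, but orchestrated delicately to pair each piece of the error with the right projection residual of the adjoint variables. Starting from $\|\bfe_u\|^2_\Omega = (\bfe_u, \nabla \times \bftheta - \nabla \sigma)_{\calT_h}$ via \eqref{adj_2}, I would process the two halves separately by inserting projections of $\bftheta, \bfphi, \sigma$ as test functions in the error equations \eqref{error_1}--\eqref{error_5} and exploiting the resulting orthogonalities. The key structural inputs are: (i) $\nabla \cdot \bfe_u = 0$, which follows from $\nabla \cdot \bfu = 0$, the divergence-free property $\nabla\cdot\bfu_h = 0$ established in Lemma \ref{well-posedness}, and the commuting-diagram identity $\nabla \cdot \bfPi^{k+1}_{\bf BDM} \bfu = \Pi_Q(\nabla \cdot \bfu)$; (ii) the analogous identity $\nabla \cdot \bfPi^{k+1}_{\bf RT} \bfphi = \Pi_Q(\nabla \cdot \bfphi) = 0$ from \eqref{adj_3}; and (iii) single-valuedness of $\bftheta, \bfphi, \sigma$ across interior faces together with the boundary conditions \eqref{adj_4}, \eqref{adj_5}.

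For the term $(\bfe_u, \nabla \times \bftheta)_{\calT_h}$, I would split it as $(\bfe_u, \nabla \times (\bftheta - \bfPi_W \bftheta))_{\calT_h} + (\bfe_u, \nabla \times \bfPi_W \bftheta)_{\calT_h}$. Elementwise integration by parts on the first piece, together with the $L^2$-orthogonality of $\bfPi_W$ (which annihilates the volume term since $\nabla \times \bfe_u \in \bfW_h$), reduces it to a boundary contribution $\langle \bfe_u^t \times \bfn, \bftheta - \bfPi_W\bftheta\rangle_{\partial\calT_h}$. The second piece is handled by choosing $\bfr = \bfPi_W \bftheta$ in \eqref{error_1}. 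Combining, and using that $\bfe_{\widehat{u}^t}|_{\partial\Omega} = 0$ while $\bftheta$ is single-valued on interior faces (so $\langle \bfe_{\widehat{u}^t} \times \bfn, \bftheta \rangle_{\partial\calT_h}=0$), I obtain $(\bfe_u, \nabla \times \bftheta)_{\calT_h} = (\bfe_w, \bftheta)_{\calT_h} + T_1$.

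Next, write $(\bfe_w, \bftheta)_{\calT_h} = (\bfe_w, \nabla \times \bfphi)_{\calT_h}$ and split via $\bfPi^{k+1}_{\bf RT}\bfphi$. The RT moment-orthogonality of $\bfphi - \bfPi^{k+1}_{\bf RT}\bfphi$ against polynomials in $\bfP_k$ discards the volume term $(\nabla \times \bfe_w, \bfphi - \bfPi^{k+1}_{\bf RT}\bfphi)_{\calT_h}$ arising after integration by parts. Taking $\bfv = \bfPi^{k+1}_{\bf RT} \bfphi$ in \eqref{error_2} produces the remaining contribution; here the $(e_p, \nabla \cdot \bfPi^{k+1}_{\bf RT}\bfphi)_{\calT_h}$ term disappears by the RT commuting diagram. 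Single-valuedness of $\bfphi$ combined with $\bfphi \times \bfn|_{\partial\Omega} = 0$ and $\widehat{p}_h|_{\partial\Omega} = 0$ allows me to replace $\bfPi^{k+1}_{\bf RT}\bfphi$ by $\bfPi^{k+1}_{\bf RT}\bfphi - \bfphi$ in the surviving face integrals, yielding $T_3$ and $\langle e_{\widehat{p}}, (\bfphi - \bfPi^{k+1}_{\bf RT}\bfphi)\cdot\bfn\rangle_{\partial\calT_h}$. To convert the latter into $-T_2$, I add and subtract $\langle e_p, (\bfphi - \bfPi^{k+1}_{\bf RT}\bfphi)\cdot \bfn\rangle_{\partial\calT_h}$; elementwise integration by parts shows this added term vanishes, since $\nabla e_p \in \bfP_{k-1}$ is killed by the RT volume moments and $\nabla\cdot(\bfphi - \bfPi^{k+1}_{\bf RT}\bfphi)=0$.

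Finally, for $-(\bfe_u, \nabla \sigma)_{\calT_h}$ I would split as $-(\bfe_u, \nabla\Pi_Q\sigma)_{\calT_h} - (\bfe_u, \nabla(\sigma - \Pi_Q\sigma))_{\calT_h}$. Equation \eqref{error_3} with $q = \Pi_Q\sigma$ rewrites the first piece as a boundary integral; the second, after integration by parts, has its volume term killed by $\nabla \cdot \bfe_u = 0$. Collecting and using that $\bfu\cdot\bfn - \widehat{\bfu}^n_h\cdot\bfn$ paired with the single-valued, boundary-vanishing $\sigma$ gives zero (so $\Pi_Q\sigma$ can be replaced by $\Pi_Q\sigma - \sigma$), produces exactly $T_4$. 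The principal obstacle is bookkeeping rather than any single deep step: one must orchestrate the add/subtract manipulations so that each of $e_p, e_{\widehat{p}}, \bfe_w, \bfw - \widehat{\bfw}_h, \bfe_u^t - \bfe_{\widehat{u}^t}, \bfe_u\cdot\bfn, \bfu\cdot\bfn - \widehat{\bfu}_h\cdot\bfn$ ends up paired against the correct residual among $\bftheta - \bfPi_W\bftheta$, $\bfphi - \bfPi^{k+1}_{\bf RT}\bfphi$, $\sigma - \Pi_Q\sigma$, and to repeatedly verify that every dropped boundary term vanishes either by adjoint single-valuedness plus a boundary condition, or by a projection moment-orthogonality.
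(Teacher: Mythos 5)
Your proposal is correct and follows essentially the same route as the paper: the same duality argument with $\bfPi_W\bftheta$, $\bfPi^{k+1}_{\bf RT}\bfphi$, $\Pi_Q\sigma$ inserted as test functions in \eqref{error_1}--\eqref{error_3}, the same use of projection orthogonalities, commuting diagrams, and single-valuedness plus the adjoint boundary conditions to discard terms, differing only in bookkeeping (you chain the identities sequentially, while the paper adds the zero volume terms $-(\bfe_w,\bftheta-\nabla\times\bfphi)_{\calT_h}-(e_p,\nabla\cdot\bfphi)_{\calT_h}$ up front). The one detail worth making explicit is that $\bfPi^{k+1}_{\bf RT}\bfphi$ is an admissible test function in \eqref{error_2}, i.e.\ $\bfPi^{k+1}_{\bf RT}\bfphi\in\bfV_h$, which holds precisely because $\nabla\cdot\bfphi=0$ forces the ${\bf RT}_{k+1}$ interpolant to lie in $\bfP_{k+1}(K)$.
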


\begin{proof}
Notice that by the dual equation \eqref{adj_3} we know that $\bfphi$ is divergence free. As a consequence, we have $\bfPi^{k+1}_{\bf RT} \bfphi \in \bfV_h$. 
By the adjoint equation \eqref{adj_eq} we have
\begin{align*}
\|\bfe_u\|^2_{\Omega}  &=  (\bfe_u, \nabla \times \bftheta - \nabla \sigma)_{\calT_h} - (\bfe_w, \bftheta - \nabla \times \bfphi)_{\calT_h} - (e_p, \nabla \cdot \bfphi)_{\calT_h} \\
&= (\bfe_u, \nabla \times \bftheta)_{\calT_h} - (\bfe_w, \bftheta)_{\calT_h} \\
&\quad + (\bfe_w, \nabla \times \bfphi)_{\calT_h} - (e_p, \nabla \cdot \bfphi)_{\calT_h} \\
& \quad - (\bfe_u, \nabla \sigma)_{\calT_h} \\
&= (\bfe_u, \nabla \times \bfPi_W \bftheta)_{\calT_h} - (\bfe_w, \bfPi_W \bftheta)_{\calT_h} +  (\bfe_u, \nabla \times (\bftheta - \bfPi_W \bftheta))_{\calT_h} - (\bfe_w, \bftheta - \bfPi_W \bftheta)_{\calT_h} \\
&\quad + (\bfe_w, \nabla \times \bfPi^{k+1}_{\bf RT} \bfphi)_{\calT_h} - (e_p, \nabla \cdot \bfPi^{k+1}_{\bf RT} \bfphi)_{\calT_h} + (\bfe_w, \nabla \times(\bfphi - \bfPi^{k+1}_{\bf RT} \bfphi))_{\calT_h} - (e_p, \nabla \cdot(\bfphi - \bfPi^{k+1}_{\bf RT} \bfphi))_{\calT_h} \\
&\quad - (\bfe_u, \nabla \Pi_Q \sigma)_{\calT_h} - (\bfe_u, \nabla ( \sigma - \Pi_Q \sigma))_{\calT_h}. \\
\intertext{By integrating by parts and the orthogonal property of the projections, we have}
\|\bfe_u\|^2_{\Omega} &= (\bfe_u, \nabla \times \bfPi_W \bftheta)_{\calT_h} - (\bfe_w, \bfPi_W \bftheta)_{\calT_h} +  \langle \bfe^t_u \times \bfn \, , \,  \bftheta - \bfPi_W \bftheta \rangle_{\partial \calT_h} \\
&\quad+ (\bfe_w, \nabla \times \bfPi^{k+1}_{\bf RT} \bfphi)_{\calT_h} - (e_p, \nabla \cdot \bfPi^{k+1}_{\bf RT} \bfphi)_{\calT_h} - \langle \bfe_w \, , \, (\bfphi - \bfPi^{k+1}_{\bf RT} \bfphi) \times \bfn \rangle_{\partial \calT_h} -  \langle e_p\, , \, (\bfphi - \bfPi^{k+1}_{\bf RT} \bfphi) \cdot \bfn \rangle_{\partial \calT_h} \\
& \quad - (\bfe_u, \nabla \Pi_Q \sigma)_{\calT_h} - \langle \bfe_u \cdot \bfn\, , \,  \sigma - \Pi_Q \sigma \rangle_{\partial \calT_h} .\\
\intertext{Taking $(\bfr, \bfv, q) = (\bfPi_W \bftheta, \bfPi^{k+1}_{\bf RT} \bfphi, \Pi_Q \sigma)$ in the error equations \eqref{error_1}-\eqref{error_3}, inserting these equations into the above identity we have}
\|\bfe_u\|^2_{\Omega} &=  \langle \bfe_{\widehat{u}^t} \times \bfn \, , \, \bfPi_W \bftheta \rangle_{\partial \calT_h}  +  \langle \bfe^t_u \times \bfn \, , \,  \bftheta - \bfPi_W \bftheta \rangle_{\partial \calT_h} \\
& - \langle \bfw - \widehat{\bfw}_h \, , \, \bfPi^{k+1}_{\bf RT} \bfphi \times \bfn \rangle_{\partial \calT_h} -  \langle e_{\widehat{p}}\, , \,  \bfPi^{k+1}_{\bf RT} \bfphi \cdot \bfn \rangle_{\partial \calT_h} \\
&- \langle \bfe_w \, , \, (\bfphi - \bfPi^{k+1}_{\bf RT} \bfphi) \times \bfn \rangle_{\partial \calT_h} -  \langle e_p\, , \, (\bfphi - \bfPi^{k+1}_{\bf RT} \bfphi) \cdot \bfn \rangle_{\partial \calT_h} \\
&   - \langle \bfu \cdot \bfn - \widehat{\bfu}^n_h \cdot \bfn \, , \, \Pi_Q \sigma \rangle_{\partial \calT_h} - \langle \bfe_u \cdot \bfn\, , \,  \sigma - \Pi_Q \sigma \rangle_{\partial \calT_h}.
\end{align*}
Notice by the regularity assumption of the adjoint problem, and \eqref{original_4}, \eqref{original_5}, 
\eqref{HDG_form_4}, \eqref{HDG_form_5}, we have 
\[
- \langle \bfe_{\widehat{u}^t} \times \bfn \, , \, \bftheta \rangle_{\partial \calT_h}=0,\;  \langle \bfw - \widehat{\bfw}_h \, , \, \bfphi \times \bfn \rangle_{\partial \calT_h} 
 = 0,\; \langle e_{\widehat{p}}\, , \, \bfphi \cdot \bfn \rangle_{\partial \calT_h} =0,\; \langle \bfu \cdot \bfn - \widehat{\bfu}^n_h \cdot \bfn \, , \,  \sigma \rangle_{\partial \calT_h} = 0.
\]
Inserting the above zero terms in the last expression of $\|\bfe_u\|^2_{\Omega}$ we obtain the desired identity in the lemma. This completes the proof.
\end{proof}

Finally, we are ready to present the estimate for $\|\bfe_u\|_{\Omega}$.
\begin{lemma}\label{estimate_eu}
Under the same assumption as Lemma \ref{estimate_ew}, in addition if the regularity assumption \eqref{regularity} holds for the adjoint problem, then we have 
\[
\|\bfe_u\|_{\Omega} \le C \Big( h^{t+\alpha-1} \|\bfu\|_{t, \Omega} + h^{s+\alpha} \|\bfw\|_{s, \Omega} \Big),
\]
for $1 \le s \le k+1, \, 1 \le r \le k+1,\, 1 \le t \le k+2$ and $0<\alpha\leq 1$ is defined in \eqref{regularity}.
\end{lemma}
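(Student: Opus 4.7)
The plan is to start from the identity in Lemma~\ref{dual_identity}, which expresses $\|\bfe_u\|_\Omega^2$ as $T_1-T_2+T_3+T_4$, and to bound each $|T_i|$ by a constant multiple of $(h^{t+\alpha-1}\|\bfu\|_{t,\Omega}+h^{s+\alpha}\|\bfw\|_{s,\Omega})\,\|\bfe_u\|_\Omega$; dividing by $\|\bfe_u\|_\Omega$ will then give the stated inequality. Throughout, I would use the regularity estimate \eqref{regularity} to convert the norms $\|\bftheta\|_{\alpha,\Omega}$, $\|\bfphi\|_{1+\alpha,\Omega}$, and $\|\sigma\|_{\alpha,\Omega}$ into $\|\bfe_u\|_\Omega$.

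I would first dispose of $T_2$ and $T_4$ by exploiting the standard orthogonality of the \textbf{BDM} and \textbf{RT} projections on faces. For $T_2$, the normal flux $(\bfphi-\bfPi^{k+1}_{\bf RT}\bfphi)\cdot\bfn$ is single-valued on each interior face and $L^2$-orthogonal to $P_{k+1}(F)$. Summing the contributions from the two sides of an interior face collapses $e_p-e_{\widehat p}$ to a $P_k(F)$ jump, which lies in $P_{k+1}(F)$, while on a boundary face $e_{\widehat p}=0$ by \eqref{adj_5} and $e_p|_K\in P_k(F)$, so $T_2=0$. For $T_4$, the $\text{HDG}_s$ definition \eqref{HDG_trace_u} collapses the first factor to $(\bfu-\bfPi^{k+1}_{\bf BDM}\bfu)\cdot\bfn$, which is $L^2$-orthogonal to $P_{k+1}(F)$; inserting the $L^2$-projection of $\sigma$ onto $P_{k+1}(F)$ and applying Cauchy--Schwarz with the face approximation bounds yields $|T_4|\le C h^{t+\alpha-1}\|\bfu\|_{t,\Omega}\|\bfe_u\|_\Omega$.

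For $T_3$ I would reuse the expansion of $\bfw-\widehat\bfw_h-\bfe_w$ obtained in the proof of Lemma~\ref{lemma_energy_id}, writing $T_3$ as the sum of three pairings with $(\bfphi-\bfPi^{k+1}_{\bf RT}\bfphi)\times\bfn$: one with $\bfw-\bfPi_W\bfw$, one with $\tau_t(\bfP_{\bfM}\bfe_u^t-\bfe_{\widehat u^t})\times\bfn$, and one with $\tau_t\bfP_{\bfM}(\bfu-\bfPi^{k+1}_{\bf BDM}\bfu)^t\times\bfn$. Each piece is estimated by Cauchy--Schwarz, using $\|\bfphi-\bfPi^{k+1}_{\bf RT}\bfphi\|_{\partial\calT_h}\le Ch^{\alpha+\frac12}\|\bfphi\|_{1+\alpha,\Omega}$, the approximation estimates \eqref{classic_ineq}, the bounds of Lemma~\ref{estimate_ew}, and the choice $\tau_t=\mathcal{O}(1/h)$; the powers of $h$ combine to the required $(h^{t+\alpha-1}\|\bfu\|_{t,\Omega}+h^{s+\alpha}\|\bfw\|_{s,\Omega})\|\bfe_u\|_\Omega$.

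The main obstacle is $T_1=\langle(\bfe_u^t-\bfe_{\widehat u^t})\times\bfn,\,\bftheta-\bfPi_W\bftheta\rangle_{\partial\calT_h}$, because Lemma~\ref{estimate_ew} only controls the $\bfP_{\bfM}$-projected quantity $\bfP_{\bfM}\bfe_u^t-\bfe_{\widehat u^t}$. I would split $\bfe_u^t-\bfe_{\widehat u^t}=(\bfe_u^t-\bfP_{\bfM}\bfe_u^t)+(\bfP_{\bfM}\bfe_u^t-\bfe_{\widehat u^t})$; the second summand is handled directly by Cauchy--Schwarz and Lemma~\ref{estimate_ew}. The first summand is where the enhanced structure of $\bfM^t(F)=[\bfP_k(F)+\nabla\widetilde{P}_{k+2}(F)]^t$ pays off: for any $\eta\in P_{k+2}(K)$, its tangential trace $(\nabla\eta)^t|_F$ equals the surface gradient of $\eta|_F$, which splits into a $\bfP_k(F)$ contribution (from the part of $\eta|_F$ of degree $\le k+1$) and a $\nabla\widetilde{P}_{k+2}(F)$ contribution (from the homogeneous degree-$(k+2)$ part), so $(\nabla\eta)^t\in\bfM^t(F)$ and therefore $\bfe_u^t-\bfP_{\bfM}\bfe_u^t=(\bfe_u+\nabla\eta)^t-\bfP_{\bfM}(\bfe_u+\nabla\eta)^t$. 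Choosing $\eta$ optimally per element as in \eqref{norm_equiv} and using the polynomial inverse trace inequality then gives $\|\bfe_u^t-\bfP_{\bfM}\bfe_u^t\|_{\partial\calT_h}\le Ch^{1/2}\|\nabla\times\bfe_u\|_{\calT_h}$. Combining this with the control on $\|\nabla\times\bfe_u\|_{\calT_h}$ from Lemma~\ref{estimate_ew} and $\|\bftheta-\bfPi_W\bftheta\|_{\partial\calT_h}\le Ch^{\alpha-1/2}\|\bftheta\|_{\alpha,\Omega}$ finishes the bound on $T_1$, and summing the four contributions yields the estimate.
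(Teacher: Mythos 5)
Your proposal is correct and follows essentially the same route as the paper: the same splitting of $T_1$ into the $\bfP_{\bfM}$-projected part and the remainder handled via the $\eta\in P_{k+2}$ trick and \eqref{norm_equiv}, the same expansion of $\bfw-\widehat{\bfw}_h-\bfe_w$ for $T_3$, and the same face-orthogonality of the $\mathbf{RT}$ projection to kill $T_2$. The only (harmless) deviation is $T_4$: the paper shows it vanishes exactly, since after the $\mathbf{BDM}$ orthogonality removes $\Pi_Q\sigma$ the single-valuedness of the normal trace and the continuity of $\sigma$ reduce the sum to a boundary integral where $\sigma=0$, whereas you settle for the bound $Ch^{t+\alpha-1}\|\bfu\|_{t,\Omega}\|\bfe_u\|_{\Omega}$, which is still sufficient for the stated estimate.
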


\begin{proof}
We will estimate $T_1$-$T_4$ in the above Lemma separately. First let us show that in fact $T_2, T_4$ vanish.  Notice that on each $F \in \partial \mathcal{T}_h$, $e_p - e_{\widehat{p}} \in P_{k+1}(F)$, by the property of the {\bf RT} projection we have
\begin{align*}
T_2 = \langle e_p -  e_{\widehat{p}} \, , \, (\bfphi - \bfPi^{k+1}_{\bf RT} \bfphi) \cdot \bfn \rangle_{\partial \calT_h} = 0. 
\end{align*}
For the last term $T_4$, we have
\begin{align*}
T_4 & = \langle \bfu \cdot \bfn - {\widehat{\bfu}_h} \cdot \bfn -\bfe_u \cdot \bfn \, , \, \sigma - \Pi_Q \sigma \rangle_{\partial \calT_h} \\
& =  \langle (\bfu - \bfPi^{k+1}_{\bf BDM} \boldsymbol{u})\cdot \bfn - ({\widehat{\bfu}_h} \cdot \bfn -\boldsymbol{u}_h \cdot \bfn) \, , \, \sigma - \Pi_Q \sigma \rangle_{\partial \calT_h}\\
& =  \langle (\bfu - \bfPi^{k+1}_{\bf BDM} \boldsymbol{u})\cdot \bfn\, , \, \sigma - \Pi_Q \sigma \rangle_{\partial \calT_h} \\
& =  \langle (\bfu - \bfPi^{k+1}_{\bf BDM} \boldsymbol{u})\cdot \bfn\, , \, \sigma \rangle_{\partial \calT_h} \\
& =  \langle (\bfu - \bfPi^{k+1}_{\bf BDM} \boldsymbol{u})\cdot \bfn\, , \, \sigma \rangle_{\partial \Omega} = 0. 
\end{align*}
For $T_1$, we rewrite it into two parts:
\[
T_1 = \langle (\bfP_{\bfM} \bfe_u^t - \bfe_{\widehat{u}^t})\times \bfn \, , \, \bftheta - \bfPi_W \bftheta \rangle_{\partial \calT_h} + \langle (\bfe_u^t - \bfP_{\bfM} \bfe_u^t )\times \bfn \, , \, \bftheta - \bfPi_W \bftheta \rangle_{\partial \calT_h} := T_{11} + T_{12}.
\]
Applying Cauchy-Schwarz inequality on $T_{11}$ and then \eqref{classic_4}, we have
\begin{align*}
T_{11} &\le \|(\bfP_{\bfM} \bfe_u^t - \bfe_{\widehat{u}^t})\times \bfn\|_{\partial \calT_h} \|\bftheta - \bfPi_W \bftheta\|_{\partial \calT_h} \\
& \le \tau_t^{\frac12} \|(\bfP_{\bfM} \bfe_u^t - \bfe_{\widehat{u}^t})\times \bfn\|_{\partial \calT_h} \; \tau_t^{-\frac12} h^{\alpha-\frac12} \|\bftheta\|_{\alpha, \Omega} \\
& \le C \tau_t^{-\frac12} h^{\alpha-\frac12} \|\bfe_u\|_{\Omega} \;  \tau_t^{\frac12} \|(\bfP_{\bfM} \bfe_u^t - \bfe_{\widehat{u}^t})\times \bfn\|_{\partial \calT_h},
\end{align*}
the last step is due to the regularity condition \eqref{regularity}. For $T_{12}$, we will apply a similar argument as the estimate for $T_{12}$ in Lemma \ref{estimate_ew}. Namely, for any $\eta \in P_{k+2}(\calT_h)$, by \eqref{classic_4} and \eqref{regularity}, we have
\begin{align*}
T_{12} &=   \langle ((\bfe_u^t + (\nabla \eta)^t) -  \bfP_{\bfM} (\bfe_u^t + (\nabla \eta)^t) )\times \bfn \, , \, \bftheta - \bfPi_W \bftheta \rangle_{\partial \calT_h} \\
&\le \|((\bfe_u^t + (\nabla \eta)^t) -  \bfP_{\bfM} (\bfe_u^t + (\nabla \eta)^t) )\|_{\partial \calT_h} \|\bftheta - \bfPi_W \bftheta\|_{\partial \calT_h} \\
&\le \|\bfe_u^t + (\nabla \eta)^t \|_{\partial \calT_h} \|\bftheta - \bfPi_W \bftheta\|_{\partial \calT_h} \\
& \le C h^{-\frac12} \|\bfe_u + \nabla \eta\|_{\calT_h} \|\bftheta - \bfPi_W \bftheta\|_{\partial \calT_h}.
\end{align*}
By the similar technique in (\ref{norm_equiv}), we have
\begin{align*}
T_{12} &\le C h^{-\frac12} \min_{\eta \in P_{k+2}(\calT_h)}  \|\bfe_u + \nabla \eta\|_{\calT_h} \|\bftheta - \bfPi_W \bftheta\|_{\partial \calT_h} \\
&\le C h^{\frac12} \|\nabla \times \bfe_u\|_{\calT_h}  \|\bftheta - \bfPi_W \bftheta\|_{\partial \calT_h} \\
& \le C h^{\frac12} \|\nabla \times \bfe_u\|_{\calT_h} h^{\alpha-\frac12} \|\bftheta\|_{\alpha, \Omega} \\
&\le  C h^{\alpha} \|\nabla \times \bfe_u\|_{\calT_h}  \|\bfe_u\|_{\Omega}.
\end{align*}
For $T_3$, we can use the expressions in the proof of Lemma \ref{lemma_energy_id} below \eqref{energy_1}. 
\begin{align*}
T_3 & = \langle \bfw -\bfPi_W \bfw + \tau_t (\bfP_{\bfM} \bfe_u^t - \bfe_{\widehat{\bfu}^t}) \times \bfn + \tau_t \bfP_{\bfM}(\bfu^t - (\bfPi^{k+1}_{\bf RT} \bfu)^t) \times \bfn \, , \, (\bfphi - \bfPi^{k+1}_{\bf RT} \bfphi)\times \bfn \rangle_{\partial \calT_h} \\
& \le \|\bfphi - \bfPi^{k+1}_{\bf RT} \bfphi\|_{\partial \calT_h} (\| \bfw -\bfPi_W \bfw \|_{\partial \calT_h} + \tau_t \|\bfP_{\bfM} \bfe_u^t - \bfe_{\widehat{\bfu}^t}\|_{\partial \calT_h} + \tau_t \|\bfP_{\bfM}(\bfu^t - (\bfPi^{k+1}_{\bf RT} \bfu)^t)\|_{\partial \calT_h}) \\
& \le \|\bfphi - \bfPi^{k+1}_{\bf RT} \bfphi\|_{\partial \calT_h} (\| \bfw -\bfPi_W \bfw \|_{\partial \calT_h} + \tau_t \|\bfP_{\bfM} \bfe_u^t - \bfe_{\widehat{\bfu}^t}\|_{\partial \calT_h} + \tau_t \|\bfu - \bfPi^{k+1}_{\bf RT} \bfu\|_{\partial \calT_h}). \\
\intertext{By \eqref{classic_2}, \eqref{classic_4} and then \eqref{regularity}, we obtain}
T_3 & \le C \tau_t^{\frac12} h^{\alpha+\frac12} \|\bfe_u\|_{\Omega} \; \tau_t^{\frac12} \|\bfP_{\bfM} \bfe_u^t - \bfe_{\widehat{\bfu}^t}\|_{\partial \calT_h} + C \|\bfe_u\|_{\Omega} (h^{s+\alpha} \|\bfw\|_{s, \Omega} + \tau_t h^{t+\alpha} \|\bfu\|_{\Omega}),
\end{align*}
for all $1 \le s \le k+1, \, 1 \le t \le k+2$. The proof is complete if we combine the estimates for $T_1$-$T_4$ and taking $\tau_t = \mathcal{O}(\frac{1}{h})$.
\end{proof}

Finally, by triangle inequality and \eqref{classic_1}, \eqref{classic_3}, Theorem \ref{main_mod} is a direct consequence of Lemmas \ref{estimate_ew}, \ref{estimate_eu}. 

\subsection{Error estimates for $\text{HDG}_g$ on general polyhedral meshes}

%
%
The proof for this case follows the main path as the above case. Therefore, we only briefly sketch the proof and illustrate all the different steps from the above proofs.

First of all, for the $\text{HDG}_g$ on general polyhedral meshes, we have an additional stabilization parameter $\tau_n > 0$, and the generic element $K$ could be polyhedral. As a consequence, the above proof for $\text{HDG}_s$ needs to be modified since the projections $\bfPi^{k+1}_{\bf BDM}, \bfPi^{k+1}_{\bf RT}$ are tailored for simplex elements. In this case in the analysis we will replace both mixed type projections with the standard $L^2$-projection, denoted by $\bfPi_V$, onto the space $\bfV_h$ and the projection error $\bfe_u := \bfPi_V \bfu - \bfu_h$. With this changes, we can easily verify that the error equations \eqref{error_eqs} in Lemma \ref{error_eq} remains the same form. In addition, the $L^2$-projection has similar approximation properties as the {\bf BDM} projection:
\begin{subequations}
\begin{align}
\label{classic_1t}
\|\bfu - \bfPi_V \bfu \|_{\calT_h} & \le C h^{t-\delta} \|\bfu\|_{t, \Omega}, && 0 \le t \le k+2,\, \delta = 0,1,  \\
\label{classic_2t}
\|\bfu - \bfPi_V \bfu \|_{\partial \calT_h} & \le C h^{t- \frac12} \|\bfu\|_{t, \Omega},  && 1 \le t \le k+2, 
\end{align}
\end{subequations}

By a similar energy argument, we have the following energy identity result:

\begin{lemma}\label{energy_id_g}
We have
\begin{align*}
&\quad  \|\bfe_w\|^2_{\Omega} + \tau_t \|\bfP_{\bfM} \bfe_u^t - \bfe_{\widehat{u}^t}\|^2_{\partial \calT_h} + \tau_n \|e_p - e_{\widehat{p}}\|^2_{\partial \calT_h}  \\
&=  - \langle \bfw - \bfPi_W {\bfw} \, , \, (\bfe_u^t - \bfe_{\widehat{u}^t}) \times \bfn \rangle_{\partial \calT_h}   - \langle \tau_t \bfP_{\bfM}(\bfu^t - (\bfPi_V \bfu)^t) \times \bfn \, , \, (\bfe_u^t - \bfe_{\widehat{u}^t}) \times \bfn \rangle_{\partial \calT_h} \\
& \quad - \langle (\bfu - \bfPi_V \bfu) \cdot \bfn \, , \, e_p - e_{\widehat{p}} \rangle_{\partial \calT_h} - \langle \tau_n (P_M p - \Pi_Q p)  \, , \, e_p - e_{\widehat{p}} \rangle_{\partial \calT_h} \\
&:= - T_1 - T_2 - T_3 - T_4.
\end{align*}
\end{lemma}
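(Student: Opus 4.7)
The plan is to mimic the energy-identity argument of Lemma~\ref{lemma_energy_id}, but now accounting for (i) the additional stabilization term $\tau_n(p_h-\widehat p_h)$ that appears in the modified normal numerical trace \eqref{new_trace_u}, and (ii) the fact that the flux approximation $\bfe_u$ is now defined through the $L^2$-projection $\bfPi_V$ rather than the $\bf BDM$ projection. The error system \eqref{error_eqs} has exactly the same form as in the simplicial case (as noted in the paragraph preceding the lemma), so only the decomposition of the two boundary terms has to be redone.

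First I would choose the test functions $(\bfr,\bfv,q,\bfeta,\zeta)=(\bfe_w,\bfe_u,e_p,\bfe_{\widehat{u}^t},e_{\widehat p})$ in \eqref{error_1}--\eqref{error_5}. Adding the five equations and canceling the interior volume terms (as in the derivation of \eqref{energy_1}) produces
\begin{align*}
(\bfe_w,\bfe_w)_{\calT_h}
&+\langle \bfw-\widehat{\bfw}_h-\bfe_w\,,\,(\bfe_u^t-\bfe_{\widehat u^t})\times\bfn\rangle_{\partial\calT_h}\\
&+\langle \bfu\cdot\bfn-\widehat{\bfu}_h^n\cdot\bfn-\bfe_u\cdot\bfn\,,\,e_p-e_{\widehat p}\rangle_{\partial\calT_h}=0.
\end{align*}

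Next I would decompose the two boundary factors using the numerical flux definitions. Using $\widehat{\bfw}_h=\bfw_h+\tau_t(\bfP_{\bfM}\bfu_h^t-\widehat{\bfu}_h^t)\times\bfn$ and $\bfe_w=\bfPi_W\bfw-\bfw_h$, together with the decomposition $\bfP_{\bfM}\bfu_h^t-\widehat{\bfu}_h^t=-\bigl(\bfP_{\bfM}\bfe_u^t-\bfe_{\widehat u^t}\bigr)-\bfP_{\bfM}(\bfu^t-(\bfPi_V\bfu)^t)$, I obtain
\[
\bfw-\widehat{\bfw}_h-\bfe_w=(\bfw-\bfPi_W\bfw)+\tau_t(\bfP_{\bfM}\bfe_u^t-\bfe_{\widehat u^t})\times\bfn+\tau_t\bfP_{\bfM}(\bfu^t-(\bfPi_V\bfu)^t)\times\bfn,
\]
exactly as in Lemma~\ref{lemma_energy_id}. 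The new ingredient is the second factor: from the modified trace \eqref{new_trace_u} and $\bfe_u=\bfPi_V\bfu-\bfu_h$,
\[
\bfu\cdot\bfn-\widehat{\bfu}_h^n\cdot\bfn-\bfe_u\cdot\bfn=(\bfu-\bfPi_V\bfu)\cdot\bfn-\tau_n(p_h-\widehat p_h),
\]
and writing $p_h-\widehat p_h=(\Pi_Qp-e_p)-(P_Mp-e_{\widehat p})=-\bigl(e_p-e_{\widehat p}\bigr)+(\Pi_Qp-P_Mp)$ gives
\[
\bfu\cdot\bfn-\widehat{\bfu}_h^n\cdot\bfn-\bfe_u\cdot\bfn=(\bfu-\bfPi_V\bfu)\cdot\bfn+\tau_n(e_p-e_{\widehat p})-\tau_n(\Pi_Qp-P_Mp).
\]

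Finally I would substitute these two expressions and collect the positive semidefinite contributions. The cross term $\tau_t\langle(\bfP_{\bfM}\bfe_u^t-\bfe_{\widehat u^t})\times\bfn,(\bfe_u^t-\bfe_{\widehat u^t})\times\bfn\rangle_{\partial\calT_h}$ reduces, by orthogonality of $\bfP_{\bfM}$ (since $\bfe_{\widehat u^t}\in\bfM_h^t$), to $\tau_t\|\bfP_{\bfM}\bfe_u^t-\bfe_{\widehat u^t}\|_{\partial\calT_h}^2$; the cross term $\tau_n\langle e_p-e_{\widehat p},e_p-e_{\widehat p}\rangle_{\partial\calT_h}$ is already the square $\tau_n\|e_p-e_{\widehat p}\|_{\partial\calT_h}^2$. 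Moving these three squares to the left-hand side leaves precisely the four asserted boundary terms on the right (with $-T_4=+\tau_n\langle \Pi_Qp-P_Mp,e_p-e_{\widehat p}\rangle_{\partial\calT_h}$), completing the proof. There is no real obstacle: the only delicate point is bookkeeping the sign in the rewrite $p_h-\widehat p_h=-(e_p-e_{\widehat p})+(\Pi_Qp-P_Mp)$, which is what produces the new $T_4$ term absent in the simplicial case.
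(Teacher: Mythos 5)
Your proposal is correct and follows essentially the same route the paper intends: the paper only says ``by a similar energy argument,'' and your adaptation of the proof of Lemma~\ref{lemma_energy_id} — same test functions, the same decomposition of $\bfw-\widehat{\bfw}_h-\bfe_w$ with $\bfPi_V$ in place of $\bfPi^{k+1}_{\bf BDM}$, and the new rewrite $p_h-\widehat p_h=-(e_p-e_{\widehat p})+(\Pi_Qp-P_Mp)$ producing the extra $\tau_n$-square on the left and the terms $T_3$, $T_4$ on the right — is exactly that argument, with the signs handled correctly.
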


Notice that in the above identity $T_1$ and $T_2$ are the same as before with the modified projection on $\bfu$, nevertheless, we can bound these two terms with the same argument as in Lemma \ref{estimate_ew}.

For $T_3$ we directly apply Cauchy-Schwarz inequality and \eqref{classic_2t} to obtain: 
\[
T_3 \le \|\bfu- \bfPi_V \bfu\|_{\partial \calT_h} \|e_p - e_{\widehat{p}}\|_{\partial \calT_h} \le C \tau_n^{-\frac12} h^{t- \frac12} \|\bfu\|_{t, \Omega} \; \tau_n^{\frac12} \|e_p - e_{\widehat{p}}\|_{\partial \calT_h},
\]
for $1 \le t \le k+2$.

Similarly, for $T_4$ by the Cauchy-Schwarz inequality, triangle inequality and \eqref{classic_6}-\eqref{classic_7}, we have
\begin{align*}
T_4  \le \tau_n^{\frac12} \|P_M p - \Pi_Q p\|_{\partial \calT_h} \; \tau_n^{\frac12} \|e_p - e_{\widehat{p}}\|_{\partial \calT_h} 
\le C \tau^{\frac12}_n h^{r - \frac12} \|p\|_{r, \Omega} \; \tau_n^{\frac12} \|e_p - e_{\widehat{p}}\|_{\partial \calT_h}.
\end{align*}

Combining these two estimates together with the estimates for $T_1, T_2$ in Lemma \ref{estimate_ew}, we obtain the following estimates:

\begin{lemma}\label{estimate_ew_g}
If the regularity of the exact solution $(\bfu, \bfw, p)$ of (\ref{original}) holds as assumed in Theorem \ref{main_result}, the stabilization parameters $\tau_t = \mathcal{O}(\frac{1}{h}), \tau_n = \mathcal{O}(h)$ on each $F \in \partial \calT_h$, we have
\begin{align*}
&\quad \|\bfe_w\|_{\Omega} + \tau^{\frac12}_t \|\bfP_{\bfM} \bfe_u^t - \bfe_{\widehat{u}^t}\|_{\partial \calT_h} + \tau^{\frac12}_n \|e_p - e_{\widehat{p}}\|_{\partial \calT_h} + \|\nabla \times \bfe_u\|_{\calT_h} + \|\nabla \cdot \bfe_u\|_{\calT_h} \\
&\le C \Big( h^{t-1} \|\bfu\|_{t, \Omega} + h^{r}\|p\|_{r, \Omega} + h^s \|\bfw\|_{s, \Omega} \Big),
\end{align*}
for $1 \le s \le k+1, \; 1 \le r \le k+1,\; 1 \le t \le k+2$. 
\end{lemma}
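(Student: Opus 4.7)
The plan is to follow the template of Lemma \ref{estimate_ew} but with three crucial modifications dictated by the polyhedral setting: (i) replace the mixed projection $\bfPi^{k+1}_{\bf BDM}$ by the plain $L^2$-projection $\bfPi_V$ in every estimate; (ii) incorporate the two extra boundary terms $T_3,T_4$ already bounded in the text just above; (iii) derive a genuine control on $\|\nabla\cdot \bfe_u\|_{\calT_h}$, since in the general case we no longer have $\nabla\cdot \bfu_h=0$.

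\textbf{Step 1: energy identity and the ``old'' terms.} Start from Lemma \ref{energy_id_g}. The terms $T_1$ and $T_2$ are treated exactly as in Lemma \ref{estimate_ew} with $\bfPi_V$ in place of $\bfPi^{k+1}_{\bf BDM}$; the split $T_1=T_{11}+T_{12}$ still goes through and produces a factor $\|\nabla\times\bfe_u\|_{\calT_h}$ in $T_{12}$ via the local norm equivalence \eqref{norm_equiv}, while $T_2$ is bounded by $\tau_t^{1/2}h^{t-1/2}\|\bfu\|_{t,\Omega}$ using \eqref{classic_2t}. The bounds for $T_3$ and $T_4$ are the ones displayed between Lemma \ref{energy_id_g} and the statement: they absorb into the $\tau_n^{1/2}\|e_p-e_{\widehat p}\|_{\partial\calT_h}$ norm after the choice $\tau_n=\mathcal O(h)$.

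\textbf{Step 2: control of the curl.} To close the argument on $\|\nabla\times\bfe_u\|_{\calT_h}$ I would test \eqref{error_1} with $\bfr=\nabla\times\bfe_u$, integrate by parts, exploit $(\bfn\times\nabla\times\bfe_u)|_F\in \bfM^t(F)$ (which lets me replace $\bfe_u^t-\bfe_{\widehat u^t}$ by $\bfP_{\bfM}\bfe_u^t-\bfe_{\widehat u^t}$) and a trace-inverse inequality, getting
\[
\|\nabla\times\bfe_u\|_{\calT_h}\le C\bigl(\|\bfe_w\|_{\Omega}+h^{-1/2}\|\bfP_{\bfM}\bfe_u^t-\bfe_{\widehat u^t}\|_{\partial\calT_h}\bigr),
\]
which with $\tau_t=\mathcal O(1/h)$ is controlled by the energy.

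\textbf{Step 3: control of the divergence.} This is the new ingredient and I expect it to be the main obstacle. Test \eqref{error_3} with $q=\nabla\cdot\bfe_u$ and integrate by parts to obtain
\[
\|\nabla\cdot\bfe_u\|^2_{\calT_h}=\langle \bfe_u\cdot\bfn-(\bfu\cdot\bfn-\widehat{\bfu}_h^n\cdot\bfn),\,\nabla\cdot\bfe_u\rangle_{\partial\calT_h}.
\]
Using the flux definition \eqref{new_trace_u} and $\bfe_u=\bfPi_V\bfu-\bfu_h$, the integrand rewrites as $(\bfPi_V\bfu-\bfu)\cdot\bfn+\tau_n(p_h-\widehat p_h)$, and further $p_h-\widehat p_h=(\Pi_Q p-P_M p)-(e_p-e_{\widehat p})$. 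A discrete trace/inverse inequality $\|q\|_{\partial K}\le C h^{-1/2}\|q\|_K$ for $q\in P_k(K)$ then gives
\[
\|\nabla\cdot\bfe_u\|_{\calT_h}\le C h^{-1/2}\bigl(\|(\bfu-\bfPi_V\bfu)\cdot\bfn\|_{\partial\calT_h}+\tau_n\|\Pi_Q p-P_M p\|_{\partial\calT_h}+\tau_n\|e_p-e_{\widehat p}\|_{\partial\calT_h}\bigr).
\]
With $\tau_n=\mathcal O(h)$, \eqref{classic_2t}, \eqref{classic_6}, \eqref{classic_7}, the first two contributions yield $h^{t-1}\|\bfu\|_{t,\Omega}+h^{r}\|p\|_{r,\Omega}$, while the last collapses to $h^{1/2}\cdot\tau_n^{1/2}\|e_p-e_{\widehat p}\|_{\partial\calT_h}$, which can be absorbed thanks to the small factor $h^{1/2}$.

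\textbf{Step 4: conclusion.} Plug the bounds for $T_1$--$T_4$ into the identity of Lemma \ref{energy_id_g}, apply Young's inequality to absorb the $\|\nabla\times\bfe_u\|$ factor coming from $T_{12}$ (now controlled by Step 2) and the $\tau_n^{1/2}\|e_p-e_{\widehat p}\|$ factor produced by the divergence estimate of Step 3. After choosing $\tau_t=\mathcal O(1/h)$ and $\tau_n=\mathcal O(h)$, every power of $h$ collapses to the announced rate $h^{t-1}\|\bfu\|_{t,\Omega}+h^r\|p\|_{r,\Omega}+h^s\|\bfw\|_{s,\Omega}$. The anticipated difficulty is the divergence control in Step 3, since it requires simultaneously balancing the $\tau_n$ scaling against the $h^{-1/2}$ trace-inverse loss and the projection errors of both $\Pi_Q$ and $P_M$ on $p$.
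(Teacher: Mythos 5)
Your proposal follows essentially the same route as the paper: the energy identity of Lemma \ref{energy_id_g}, the bounds for $T_1,T_2$ carried over from Lemma \ref{estimate_ew} with $\bfPi_V$ in place of $\bfPi^{k+1}_{\bf BDM}$, the displayed bounds for $T_3,T_4$, and the curl control obtained by testing \eqref{error_1} with $\bfr=\nabla\times\bfe_u$. Your Step 3 actually supplies a detail the paper leaves implicit (the paper only asserts the $\|\nabla\cdot\bfe_u\|_{\calT_h}$ bound), and your derivation of it is sound; the only slip is in the final bookkeeping there: with $\tau_n=\mathcal O(h)$ one has $h^{-1/2}\tau_n\|e_p-e_{\widehat p}\|_{\partial\calT_h}=\mathcal O(1)\cdot\tau_n^{1/2}\|e_p-e_{\widehat p}\|_{\partial\calT_h}$, not $h^{1/2}$ times that quantity, so this term is not absorbed by a small factor — it is simply bounded by the energy norm already estimated in Step 1, which is enough to close the argument.
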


As we mentioned at the beginning of the subsection, in the duality argument we need to replace the {\bf RT} projection with the standard $L^2$-projection. Nevertheless, the dual identity in Lemma \ref{dual_identity} is still valid with the projection modified:

\begin{lemma}\label{dual_identity_g}
Let $(\bfphi, \bftheta, \sigma)$ be the solution of the adjoint problem \eqref{adj_eq}, then we have
\begin{align*}
\|\bfe_u\|^2_{\Omega}  = & \langle (\bfe_u^t - \bfe_{\widehat{u}^t})\times \bfn \, , \, \bftheta - \bfPi_W \bftheta \rangle_{\partial \calT_h} - \langle e_p -  e_{\widehat{p}} \, , \, (\bfphi - \bfPi_V \bfphi) \cdot \bfn \rangle_{\partial \calT_h} \\
&+\langle \bfw - \widehat{\bfw}_h -\bfe_w \, , \, (\bfphi - \bfPi_V \bfphi)\times \bfn \rangle_{\partial \calT_h}  + \langle \bfu \cdot \bfn - {\widehat{\bfu}_h} \cdot \bfn -\bfe_u \cdot \bfn \, , \, \sigma - \Pi_Q \sigma \rangle_{\partial \calT_h} \\
:= &  T_1 - T_2 + T_3 + T_4.
\end{align*}
\end{lemma}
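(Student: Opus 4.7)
The plan is to mirror the argument of Lemma \ref{dual_identity} almost line-for-line, with the single substitution $\bfPi^{k+1}_{\bf RT}\leftrightarrow \bfPi_V$, and to verify that every step still goes through when the mixed projection is replaced by the plain $L^2$-projection onto $\bfV_h$. The natural starting point is to use the adjoint system \eqref{adj_eq} to rewrite
\[
\|\bfe_u\|_{\Omega}^{2}=(\bfe_u,\nabla\times\bftheta-\nabla\sigma)_{\calT_h}-(\bfe_w,\bftheta-\nabla\times\bfphi)_{\calT_h}-(e_p,\nabla\cdot\bfphi)_{\calT_h},
\]
the last two terms vanishing identically on $\calT_h$. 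Inserting the decompositions $\bftheta=\bfPi_W\bftheta+(\bftheta-\bfPi_W\bftheta)$, $\bfphi=\bfPi_V\bfphi+(\bfphi-\bfPi_V\bfphi)$ and $\sigma=\Pi_Q\sigma+(\sigma-\Pi_Q\sigma)$ splits the right-hand side into a ``projected'' part and a ``projection-error'' part.

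Next I would integrate by parts element-by-element on the projection-error terms and exploit the $L^2$-orthogonality of $\bfPi_W$, $\bfPi_V$ and $\Pi_Q$ against their respective discrete spaces. The orthogonalities that matter are precisely the volume ones: $(\nabla\times\bfe_u,\bftheta-\bfPi_W\bftheta)_{\calT_h}=0$ since $\nabla\times\bfe_u|_K\in\bfP_k(K)\subset\bfW_h$; similarly for $(\nabla\times\bfe_w,\bfphi-\bfPi_V\bfphi)_{\calT_h}$, $(\nabla\cdot\bfe_u,\sigma-\Pi_Q\sigma)_{\calT_h}$, and $(\nabla e_p,\bfphi-\bfPi_V\bfphi)_{\calT_h}$. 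After these cancellations, only boundary terms involving $(\bftheta-\bfPi_W\bftheta)$, $(\bfphi-\bfPi_V\bfphi)$ and $(\sigma-\Pi_Q\sigma)$ survive on the error side.

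The ``projected'' side is then handled by plugging $(\bfr,\bfv,q)=(\bfPi_W\bftheta,\bfPi_V\bfphi,\Pi_Q\sigma)$ into the error equations \eqref{error_1}--\eqref{error_3}. Here the only substantive change from Lemma \ref{dual_identity} is that $\bfPi_V\bfphi$ now belongs to $\bfV_h$ \emph{by construction} rather than because of a commuting-diagram argument for ${\bf RT}$, so this test is trivially admissible and no divergence-compatibility of the projection is needed. The three identities produced by \eqref{error_1}--\eqref{error_3} cancel against the projected volume integrals and replace them by skeleton terms involving $\bfe_{\widehat u^t}$, $\widehat{\bfw}_h$ and $\widehat{\bfu}_h^n$.

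To finish, I would add and subtract the ``exact-solution'' skeleton contributions using the boundary conditions of the adjoint problem ($\bfphi\times\bfn=0$ and $\sigma=0$ on $\partial\Omega$) together with the conservativity-type identities \eqref{HDG_form_4} and \eqref{HDG_form_5}, which make each of
\[
\langle\bfe_{\widehat u^t}\times\bfn,\bftheta\rangle_{\partial\calT_h},\;\;
\langle\bfw-\widehat{\bfw}_h,\bfphi\times\bfn\rangle_{\partial\calT_h},\;\;
\langle e_{\widehat p},\bfphi\cdot\bfn\rangle_{\partial\calT_h},\;\;
\langle\bfu\cdot\bfn-\widehat{\bfu}_h^n\cdot\bfn,\sigma\rangle_{\partial\calT_h}
\]
equal to zero. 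Combining all boundary contributions then yields exactly $T_1-T_2+T_3+T_4$. The only place where I expect a real, if minor, obstacle is keeping track of the tangential/normal bookkeeping on $\partial\calT_h$ so that the four groups assemble cleanly; everything else is algebraic and identical in form to the simplicial proof, which is why the authors only state the result and omit repeating the derivation.
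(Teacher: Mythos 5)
Your proposal is correct and follows essentially the same route the paper intends: it reruns the proof of Lemma \ref{dual_identity} verbatim with $\bfPi^{k+1}_{\bf RT}$ replaced by $\bfPi_V$, using the adjoint system, the projection decompositions, elementwise integration by parts with the $L^2$-orthogonalities of $\bfPi_W,\bfPi_V,\Pi_Q$, the error equations tested with $(\bfPi_W\bftheta,\bfPi_V\bfphi,\Pi_Q\sigma)$, and the vanishing skeleton terms from \eqref{HDG_form_4}--\eqref{HDG_form_5} and the adjoint boundary conditions. You also correctly note the one simplification (admissibility of $\bfPi_V\bfphi\in\bfV_h$ is now trivial) and, implicitly, the one loss (the facial moment properties of ${\bf RT}$ that made $T_2,T_4$ vanish are no longer available, but the identity itself does not need them).
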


Indeed, the same argument can be applied to bound $T_1, T_3$ as in Lemma \ref{estimate_eu} but $T_2, T_4$ no longer vanish. We can bound them as follows:
For $T_{2}$, we simply apply Cauchy-Schwarz inequality and then \eqref{classic_2t}, \eqref{regularity} to get
\begin{align*}
T_2 &\le \tau_n^{\frac12} \|e_p - e_{\widehat{p}}\|_{\partial \calT_h} \; \tau_n^{-\frac12} \|\bfphi - \bfPi_V \bfphi\|_{\partial \calT_h} \le \tau_n^{-\frac12} h^{\alpha+\frac12} \|\bfphi\|_{1+\alpha, \Omega} \; \tau_n^{\frac12} \|e_p - e_{\widehat{p}}\|_{\partial \calT_h} \\
& \le C  \tau_n^{-\frac12} h^{\alpha+\frac12} \|\bfe_u\|_{\Omega} \; \tau_n^{\frac12} \|e_p - e_{\widehat{p}}\|_{\partial \calT_h}.
\end{align*}
For $T_4$, it can be bounded in a similar way as for $T_3$ in the proof of Lemma \ref{estimate_eu} which gives us:
\[
T_4 \le C \tau_n^{\frac12} h^{\alpha-\frac12} \|\bfe_u\|_{\Omega} \, \tau_n^{\frac12} \|e_p - e_{\widehat{p}}\|_{\partial \calT_h} + C \|\bfe_u\|_{\Omega} (h^{t+\alpha-1} \|\bfu\|_{t, \Omega} + \tau_n h^{r+\alpha-1} \|p\|_{r, \Omega}),
\]
for $1 \le r \le k+1,\;1 \le t \le k+2$.

If we combine the above two estimates together with the bounds for $T_1, T_3$ in Lemma \ref{estimate_eu} we obtain the following result for $\bfe_u$:
\begin{lemma}\label{estimate_eu_g}
Under the same assumption as Lemma \ref{estimate_ew_g}, in addition if the regularity assumption \eqref{regularity} holds for the adjoint problem, then we have 
\[
\|\bfe_u\|_{\Omega} \le C \Big( h^{t+\alpha-1} \|\bfu\|_{t, \Omega} + h^{r+\alpha} \|p\|_{r,\Omega} + h^{s+\alpha} \|\bfw\|_{s, \Omega} \Big),
\]
for $1 \le s \le k+1, \, 1 \le r \le k+1,\, 1 \le t \le k+2$ and $0<\alpha\leq 1$ is defined in \eqref{regularity}.
\end{lemma}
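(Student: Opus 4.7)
The plan is to start from the dual identity in Lemma~\ref{dual_identity_g} and estimate the four terms $T_1$--$T_4$ on its right-hand side individually; afterwards, Lemma~\ref{estimate_ew_g} together with Young's inequality will absorb the residual factors of $\|\bfe_u\|_\Omega$ that inevitably arise on the right.

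For $T_1$ and $T_3$, the arguments are essentially those given in the proof of Lemma~\ref{estimate_eu}, the only structural change being that $\bfPi^{k+1}_{\bf BDM}$ and $\bfPi^{k+1}_{\bf RT}$ are replaced by the $L^2$-projection $\bfPi_V$. Concretely, I would split $T_1$ as $\langle (\bfP_{\bfM}\bfe_u^t - \bfe_{\widehat{u}^t})\times\bfn\, ,\,\bftheta - \bfPi_W\bftheta\rangle_{\partial\calT_h} + \langle (\bfe_u^t - \bfP_{\bfM}\bfe_u^t)\times\bfn\, ,\,\bftheta - \bfPi_W\bftheta\rangle_{\partial\calT_h}$: the first piece is controlled by Cauchy--Schwarz together with \eqref{classic_4} and $\|\bftheta\|_{\alpha,\Omega}\le C_{\text{reg}}\|\bfe_u\|_\Omega$; the second, after inserting an arbitrary $\nabla\eta$ with $\eta\in P_{k+2}(\calT_h)$ and invoking the norm equivalence \eqref{norm_equiv}, gives a bound of order $h^{\alpha}\|\nabla\times\bfe_u\|_{\calT_h}\|\bfe_u\|_\Omega$. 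For $T_3$ I would re-expand $\bfw - \widehat{\bfw}_h - \bfe_w$ exactly as in the proof of Lemma~\ref{energy_id_g}, so that the estimate reduces to pairing three interface pieces against $\|\bfphi - \bfPi_V\bfphi\|_{\partial\calT_h}\le C h^{\alpha+1/2}\|\bfphi\|_{1+\alpha,\Omega}\le C h^{\alpha+1/2}\|\bfe_u\|_\Omega$.

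The essential new ingredient compared to the simplicial case is that $T_2$ and $T_4$, which vanished in Lemma~\ref{estimate_eu} thanks to the commuting-diagram properties of the mixed projections, no longer vanish under the plain $L^2$-projection $\bfPi_V$. These are precisely the terms whose bounds are sketched just above this lemma: $T_2$ is handled by Cauchy--Schwarz together with \eqref{classic_2t} and the regularity of $\bfphi$; $T_4$ is controlled by the same decomposition of $\bfu\cdot\bfn - \widehat{\bfu}^n_h\cdot\bfn - \bfe_u\cdot\bfn$ used in Lemma~\ref{energy_id_g}, mirroring the $T_3$-style estimate from the proof of Lemma~\ref{estimate_eu}, and producing terms of the form $\tau_n^{1/2}h^{\alpha-1/2}\|\bfe_u\|_\Omega\cdot\tau_n^{1/2}\|e_p-e_{\widehat{p}}\|_{\partial\calT_h}$ plus the clean contributions $h^{t+\alpha-1}\|\bfu\|_{t,\Omega}$ and $\tau_n h^{r+\alpha-1}\|p\|_{r,\Omega}$.

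With all four bounds at hand, I would sum them, replace the auxiliary quantities $\|\nabla\times\bfe_u\|_{\calT_h}$, $\tau_t^{1/2}\|\bfP_{\bfM}\bfe_u^t - \bfe_{\widehat{u}^t}\|_{\partial\calT_h}$, and $\tau_n^{1/2}\|e_p - e_{\widehat{p}}\|_{\partial\calT_h}$ by the right-hand side supplied by Lemma~\ref{estimate_ew_g}, substitute the prescribed scalings $\tau_t=\mathcal{O}(1/h)$ and $\tau_n=\mathcal{O}(h)$, and use Young's inequality to absorb the residual $\|\bfe_u\|_\Omega$ factors into the left-hand side. The main obstacle is purely bookkeeping: tracking the powers of $h$ contributed by each occurrence of $\tau_t$, $\tau_n$, the projection approximation estimates, and the adjoint regularity, so that after the prescribed scalings the coefficients multiplying the residual $\|\bfe_u\|_\Omega$ are genuine positive powers of $h$ (hence absorbable) and the three advertised rates $h^{t+\alpha-1}$, $h^{r+\alpha}$, $h^{s+\alpha}$ emerge in the claimed combinations.
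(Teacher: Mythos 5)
Your proposal is correct and follows essentially the same route as the paper: the dual identity of Lemma \ref{dual_identity_g}, the unchanged treatment of $T_1$ and $T_3$ from Lemma \ref{estimate_eu} with $\bfPi_V$ in place of the mixed projections, the new non-vanishing bounds for $T_2$ and $T_4$ (including the extra $\tau_n(p_h-\widehat{p}_h)$ contribution in the normal-trace decomposition), and the final absorption via Lemma \ref{estimate_ew_g}, the scalings $\tau_t=\mathcal{O}(1/h)$, $\tau_n=\mathcal{O}(h)$, and Young's inequality. The stated intermediate bounds match those in the paper, so no further comment is needed.
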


Finally, Theorem \ref{main_result} is a direct consequence of Lemmas \ref{estimate_ew_g}, \ref{estimate_eu_g} with a triangle inequality and \eqref{classic_1t}, \eqref{classic_3}. 
\begin{remark}
Besides we get the error estimate for $\tau^{\frac12}_n \|e_p - e_{\widehat{p}}\|_{\partial \calT_h}$ in Lemma \ref{estimate_ew}, 
we can also derive the error estimate for $\|\nabla e_p\|_\Omega$. Actually, this can be obtained by taking $\bfv = \nabla e_p$ in the error equation (\ref{error_2}) and applying integration by parts, the Cauchy- Schwarz inequality, trace inequality and the estimates in Lemma \ref{estimate_ew} and Theorem \ref{main_result}. Then the error estimate for $\|\nabla(p-p_h) \|_\Omega$ can be further deduced by the triangular inequality.
\end{remark}

\section{Numerical results}

In this section we provide numerical experiments validating the error estimates obtained in this work. In the following simulations we consider tetrahedral meshes and $\tau_t=h^{-1}$. The implementation is based on the work developed by \cite{Fu2015}.

\begin{example}\label{example1}
(Smooth case) We test the HDG$_s$ method on a steady state Maxwell problem on a unit cube $\Omega = [0,1]^3$, where the source term $\boldsymbol{f}$ and the boundary conditions are chosen such that the exact solutions are
\begin{align*}
\bfu(x,y,z) &= \big(\sin(\pi y)\sin(\pi z),\sin(\pi x)\sin(\pi z),\sin(\pi x)\sin(\pi y)\big)^T, \\
p(x,y,z) &= \sin(2\pi x)\sin(2 \pi y) \sin(2 \pi z).
\end{align*}

In Table \ref{table1} we display the history of convergence of the $L^2$-error of $\bfu$ and $\bfw$ for $k=0$, $1$, $2$ and $3$. We observe that optimal rate of convergence are obtained, i.e., order of $h^{k+1}$ for $\bfw$ and order of $h^{k+2}$ for $\bfu$.

\begin{table}[ht!]\renewcommand{\arraystretch}{1.2}
\begin{tabular}{c|c||c c|c c}
$k$ & $h$ & $\|\boldsymbol{u}-\boldsymbol{u}_h\|_{\Omega}$ & $\text{order}$ &$\|\boldsymbol{w}-\boldsymbol{w}_h\|_{\Omega}$ & $\text{order}$\\\hline\hline
 & $4.39E-001$ & $6.81E-001$ & $-$ & $2.46E+000$ & $-$ \\ 
 & $3.29E-001$ & $3.66E-001$ & $2.16$ & $1.86E+000$ & $0.96$ \\ 
$0$ & $2.63E-001$ & $2.28E-001$ & $2.11$ & $1.50E+000$ & $0.98$ \\ 
 & $2.19E-001$ & $1.56E-001$ & $2.08$ & $1.25E+000$ & $0.99$ \\ 
 & $1.88E-001$ & $1.14E-001$ & $2.06$ & $1.07E+000$ & $0.99$ \\ 
 & $1.65E-001$ & $8.63E-002$ & $2.05$ & $9.40E-001$ & $0.99$ \\ 
\hline \hline 
& $4.39E-001$ & $1.12E-001$ & $-$ & $3.10E-001$ & $-$ \\ 
 & $3.29E-001$ & $4.80E-002$ & $2.95$ & $1.80E-001$ & $1.88$ \\ 
$1$  & $2.63E-001$ & $2.49E-002$ & $2.95$ & $1.17E-001$ & $1.92$ \\ 
 & $2.19E-001$ & $1.45E-002$ & $2.95$ & $8.25E-002$ & $1.94$ \\ 
 & $1.88E-001$ & $9.21E-003$ & $2.96$ & $6.11E-002$ & $1.95$ \\ 
 & $1.65E-001$ & $6.20E-003$ & $2.96$ & $4.71E-002$ & $1.96$ \\ 
\hline \hline 
& $4.39E-001$ & $1.24E-002$ & $-$ & $6.22E-002$ & $-$ \\ 
 & $3.29E-001$ & $3.67E-003$ & $4.22$ & $2.67E-002$ & $2.94$ \\ 
$2$  & $2.63E-001$ & $1.45E-003$ & $4.17$ & $1.38E-002$ & $2.96$ \\ 
 & $2.19E-001$ & $6.80E-004$ & $4.14$ & $8.03E-003$ & $2.97$ \\ 
 & $1.88E-001$ & $3.61E-004$ & $4.11$ & $5.07E-003$ & $2.98$ \\ 
 & $1.65E-001$ & $2.09E-004$ & $4.09$ & $3.41E-003$ & $2.98$ \\ 
\hline \hline 
& $4.39E-001$ & $1.40E-003$ & $-$ & $6.93E-003$ & $-$ \\ 
 & $3.29E-001$ & $3.31E-004$ & $5.01$ & $2.23E-003$ & $3.94$ \\ 
$3$  & $2.63E-001$ & $1.09E-004$ & $4.99$ & $9.22E-004$ & $3.96$ \\ 
 & $2.19E-001$ & $4.37E-005$ & $4.99$ & $4.47E-004$ & $3.97$ \\ 
 & $1.88E-001$ & $2.03E-005$ & $4.99$ & $2.42E-004$ & $3.98$ \\ 
 & $1.65E-001$ & $1.04E-005$ & $4.99$ & $1.42E-004$ & $3.98$ \\ 
\end{tabular}
\caption{History of convergence of Example \ref{example1}.}\label{table1}
\end{table}
\end{example}

\begin{example}\label{example2}
(Solution with low regularity) This example is based on the numerical experiment presented in \cite{Houston05}.  
We assume $\Omega$ is a L-shaped domain as $\Omega = [-1,1]\times [-1,1]\times[0,1]\setminus ([0,1]
\times [-1,0]\times [0,1])$. Let the source term $\boldsymbol{f}$ and the boundary conditions be chosen such that 
the exact solutions are
\begin{align*}
\bfu &= \bigg(\frac{\partial S}{\partial x}, \frac{\partial S}{\partial y}, 0\bigg)^T,\\
p &= 0,
\end{align*}
where the function $S$ is given in by $\displaystyle S(r,\theta) = r^{\frac{4}{3}}\sin\bigg(\frac{4 \theta}{3}\bigg)$. 
The solution is given in terms of cylindrical coordinates. 
The exact solution of $\bfu$ has a singularity located at $z$-axis, and $\bfu \in [H^{\frac{4}{3}-\epsilon}
(\Omega)]^3,\epsilon>0$; hence, we only display the results corresponding to polynomials of degree $k=0$ and $k=1$.  

In Table \ref{table2} the history of convergence of the HDG$_s$ method is displayed. Here we observe 
that the convergence rates deteriorate as Theorem \ref{main_mod} predicts. In fact,  $\bfu_h$ converges to $\bfu$ with 
order $h^{4/3-\epsilon}$ and $\boldsymbol{w}_h$ converges to $\boldsymbol{w}$ with order $h^{1/3-\epsilon}$.

\begin{table}[ht!]
\begin{tabular}{c|c||c c|c c}
$k$ & $h$ & $\|\boldsymbol{u}-\boldsymbol{u}_h\|_{\Omega}$ & $\text{order}$ &$\|\boldsymbol{w}-\boldsymbol{w}_h\|_{\Omega}$ 
& $\text{order}$\\\hline\hline
& $5.00E-01$ & $1.53E-01$ & $-$ & $4.83E-02$ & $-$ \\ 
 & $2.50E-01$ & $6.24E-02$ & $1.30$ & $4.99E-02$ & $-$ \\ 
$0$  & $1.25E-01$ & $2.53E-02$ & $1.30$ & $4.40E-02$ & $0.18$ \\ 
 & $6.25E-02$ & $1.03E-02$ & $1.30$ & $3.64E-02$ & $0.27$ \\ 
 & $3.12E-02$ & $4.25E-03$ & $1.27$ & $2.94E-02$ & $0.31$ \\ 
\hline \hline 
& $5.00E-01$ & $6.66E-02$ & $-$ & $2.19E-02$ & $-$ \\ 
 & $2.50E-01$ & $2.61E-02$ & $1.35$ & $2.12E-02$ & $0.04$ \\ 
$1$  & $1.25E-01$ & $1.03E-02$ & $1.34$ & $1.78E-02$ & $0.26$ \\ 
 & $6.25E-02$ & $4.11E-03$ & $1.33$ & $1.44E-02$ & $0.30$ \\ 
 & $3.12E-02$ & $1.65E-03$ & $1.32$ & $1.16E-02$ & $0.32$ \\ 
\end{tabular}
\caption{History of convergence of Example \ref{example2}.}\label{table2}
\end{table}



\end{example}

\section*{Acknowledgment} The work of Huangxin Chen was supported by the NSF of China (Grant No. 11201394) 
and the Fundamental Research Funds for the Central Universities (Grant No. 20720150005). The work of Weifeng Qiu 
was partially supported by a grant from the Research Grants Council of the Hong Kong Special Administrative Region, China 
(Project No. CityU 11302014). Manuel Solano was partially supported by CONICYT-Chile through the FONDECYT project No. 1160320 and
BASAL project CMM, Universidad de Chile, by Centro de Investigaci\'on en Ingenier\'ia Matem'atica (CI$^2$MA), Universidad
de Concepci\'on, and by CONICYT project Anillo ACT1118 (ANANUM).
As a convention the names of the authors are alphabetically ordered. All authors 
contributed equally in this article. 

\providecommand{\bysame}{\leavevmode\hbox to3em{\hrulefill}\thinspace}
\providecommand{\MR}{\relax\ifhmode\unskip\space\fi MR }
\providecommand{\MRhref}[2]{%
  \href{http://www.ams.org/mathscinet-getitem?mr=#1}{#2}
}
\providecommand{\href}[2]{#2}

\end{document}